\newtheorem{thm}{Theorem}[section]
\newtheorem{lem}[thm]{Lemma}
\theoremstyle{definition}
\newcommand{\F}{\mathbb{F}}
\newcommand{\Aut}{\mathrm{\mathop{Aut}}}
\newcommand{\GL}{\mathrm{\mathop{GL}}}
\renewcommand{\S}{\mathcal S}
\renewcommand{\char}{\text{char}\, }
\newcommand{\relstar}{\,\stackrel{*}{\sim}\,}
\newcommand{\relstarplus}{\,\stackrel{*+}{\sim}\,}
\newcommand{\relpsi}{\,\stackrel{\psi}{\sim}\,}
\renewcommand{\leq}{\leqslant}
\begin{document}

\title{Six-dimensional nilpotent Lie algebras}
\author{Serena Cical\`o, Willem A. de Graaf and Csaba Schneider}
\date{1 June 2020}
\maketitle

\begin{abstract}
We give a full classification of 6-dimensional nilpotent Lie algebras over an arbitrary field, including fields that are not algebraically closed and fields of characteristic~2. To achieve the classification we use the action of the automorphism group on the second cohomology space, as isomorphism types of nilpotent Lie algebras correspond to orbits of subspaces under this action. In some cases, these orbits are determined using geometric invariants, such as the Gram determinant or the Arf invariant. As a byproduct, we completely determine, for a 4-dimensional vector space $V$, the orbits of $\GL(V)$ on the set of 2-dimensional subspaces of $V\wedge V$. 
\end{abstract}

\bigskip

\noindent
{\bf Keywords and phrases:} nilpotent Lie algebras, six-dimensional 
Lie algebras, second cohomology, Klein correspondence, quadratic forms, 
Arf invariant.

\bigskip

\noindent
{\bf 2010 Mathematics Subject Classification:} 17B30, 17B40, 17B56, 11E04.

\bigskip

\section{Introduction}

The classification of small-dimensional Lie algebras is a classical problem. The history of the classification problem
of 6-dimensional nilpotent Lie algebras goes back to Umlauf (\cite{umlauf}). In the 1950's Morozov (\cite{morozov}) published a classification of 6-dimensional nilpotent Lie algebras valid over fields of characteristic 0. Recently several classifications have appeared, over various ground fields. We mention \cite{gong} (over algebraically closed fields, and over the real field), \cite{sch} (over various finite fields) \cite{artw} (over fields of characteristic not 2). However, no classification that treats all ground fields, in particular fields of characteristic 2, is known up to now. It is the purpose of this paper to complete the classification
of 6-dimensional nilpotent Lie algebras over an arbitrary field. \par 
Nilpotent Lie algebras up to dimension five are well-known. There is just one isomorphism type of nilpotent Lie algebras with dimension two, two  isomorphism types in dimension~3, three isomorphism type in dimension 4, and 9 isomorphism types in dimension~5. The classification of nilpotent Lie algebras with dimension up to~5 is independent of the field in the sense that the isomorphism types can be described by uniform structure constant tables with integer entries. This is not, however, the case in dimension~6, as the number of isomorphism types of 6-dimensional nilpotent Lie algebras may depend on the characteristic of the underlying field. There are 36 isomorphism types over $\F_2$, but only 34 isomorphism types over $\F_3$; see~\cite{sch}. Over fields $\F$ of characteristic not 2, the number of isomorphism types is $26+4s$ where $s$ is the (possibly infinite) 
index of $(\F^*)^2$ in the multiplicative group
$\F^*$ of $\F$ (\cite{artw}). In the present paper we give a classification that covers all ground fields, in particular also those of characteristic 2. For a field $\F$ of characteristic~2, define an equivalence relation $\relstarplus$ on $\F$ as follows:
$\alpha\relstarplus\beta$ if and only if $\alpha=\gamma^2\beta+\delta^2$ with some $\gamma\in\F^*$ and  $\delta\in\F$. Let $t$ be the (possibly infinite) number of 
 equivalence classes of $\relstarplus$ in $\F$. Moreover, for a field $\F$ of characteristic~2, define the
 equivalence relation $\relpsi$ by $\alpha\relpsi\beta$ if and only if there is a $\gamma\in \F$ with $\gamma^2+\gamma+\alpha+\beta=0$. Let $u$ be the (possibly infinite) number of equivalence classes of
 $\relpsi$ in $\F$.

\begin{thm}\label{main}
The number of isomorphism types of nilpotent Lie algebras with dimension~$6$ over fields of characteristic different from~$2$ is $26+4s$, while this number is $26+2s+4t+2u$ over fields of characteristic~$2$.
\end{thm}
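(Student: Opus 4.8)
The plan is to use the central-extension (Skjelbred--Sund) method, which converts the classification into orbit problems on cohomology spaces. First I would recall that an arbitrary nilpotent Lie algebra $L$ splits as $L\cong L_0\oplus \F^m$ where $L_0$ has no abelian direct factor; since the abelian algebra contributes one type and the summands of dimension $<6$ are already classified (field-independently) in dimensions $\leq 5$, it suffices to count the indecomposable $L$ of dimension exactly~$6$. For such an $L$ put $k=\dim Z(L)$ and $\mathfrak g=L/Z(L)$, a nilpotent algebra of dimension $6-k$. Then $L$ is recovered as the central extension $\mathfrak g_W$ determined by a $k$-dimensional subspace $W\subseteq H^2(\mathfrak g)$ whose cocycles have no common radical inside $Z(\mathfrak g)$ (the non-degeneracy condition ensuring $Z(\mathfrak g_W)=Z$ and no new abelian factor), and two such extensions are isomorphic precisely when the subspaces lie in the same orbit of the natural $\Aut(\mathfrak g)$-action on the Grassmannian of $k$-subspaces of $H^2(\mathfrak g)$. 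This reduces the whole problem to a finite list of orbit-counting tasks, one for each base algebra $\mathfrak g$ of dimension $\leq 5$ and each admissible $k$.

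Second, I would run through the known nilpotent algebras $\mathfrak g$ of dimension $\leq 5$; for each I would write down $Z^2(\mathfrak g)$, the coboundaries $B^2(\mathfrak g)$, the space $H^2(\mathfrak g)$, and the induced representation of $\Aut(\mathfrak g)$ on it. For each admissible $k$ I would then enumerate the $\Aut(\mathfrak g)$-orbits of non-degenerate $k$-subspaces, discarding those whose extension has too large a center (these correspond to algebras already obtained from a smaller base, and bookkeeping via $\dim Z(\mathfrak g_W)=k$ prevents double counting across different $\mathfrak g$). For the overwhelming majority of base algebras these orbit counts are finite and independent of the ground field, and their sum produces the field-independent constant ($26$, resp.\ $30$ in characteristic~$2$).

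The remaining, genuinely field-dependent contributions all trace back to one orbit problem, which is also the promised byproduct: the action of $\GL(V)$ on $2$-dimensional subspaces of $V\wedge V$ with $\dim V=4$, arising from the base algebra $\mathfrak g=\F^4$ (abelian), for which $H^2(\mathfrak g)\cong V\wedge V$ and $\Aut(\mathfrak g)=\GL(V)$. Here I would invoke the Klein correspondence: $V\wedge V$ is a $6$-dimensional space equipped with the non-degenerate quadratic form $q$ defined by $\omega\wedge\omega=q(\omega)\,(v_1\wedge v_2\wedge v_3\wedge v_4)$, whose zero locus is exactly the decomposable bivectors, and $\GL(V)$ acts as a group of similitudes of $q$. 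A $2$-subspace $W$ is then classified by the restriction $q|_W$ up to similarity and basis change: the totally singular lines (two rulings of the Klein quadric), the tangent lines (rank-one restriction), the secant lines (split non-degenerate binary restriction), and the anisotropic lines. Only the anisotropic case is field-sensitive: anisotropic binary quadratic forms modulo similarity are distinguished in characteristic $\neq 2$ by their Gram determinant taken in $\F^*/(\F^*)^2$, which has $s$ classes, and in characteristic~$2$ additionally by the Arf invariant, whose values are governed by the relation $\relstarplus$ with $t$ classes. Tallying these parametric families over all base algebras that feed into this problem yields the terms $4s$ and $2s+4t$ respectively.

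The main obstacle is this last step. Over a general field one cannot simply put quadrics into normal form, so the orbit analysis must be carried out intrinsically through the similitude action of $\GL(V)$ on binary restrictions of $q$, and the characteristic-$2$ case requires genuinely different tools: the polar form of $q$ becomes alternating and hence useless for separating anisotropic forms, forcing the use of the Arf invariant and the finer relation $\relstarplus$. Carefully matching these arithmetic invariants to honest isomorphism invariants of the Lie algebras, while simultaneously enforcing the non-degeneracy condition and avoiding any cross-base double counting, is where the real work lies; the remaining base algebras contribute only routine, field-independent orbit computations.
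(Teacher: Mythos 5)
Your framework is exactly the paper's: the Skjelbred--Sund correspondence between step-$s$ descendants and $\Aut(\mathfrak g)$-orbits of allowable $s$-dimensional subspaces of $H^2(\mathfrak g,\F)$, with the abelian base handled via the Klein quadric, Gram determinants in characteristic $\neq 2$, and the Arf invariant in characteristic~$2$. The genuine gap is your structural claim that \emph{all} field-dependent contributions ``trace back to one orbit problem,'' namely $\GL(V)$ acting on $2$-subspaces of $V\wedge V$ for the abelian base $\F^4$. That single problem produces only the family $L_{6,22}(\varepsilon)$ with $\varepsilon\in\F/(\relstar)$ (respectively, in characteristic~$2$, the family with $\varepsilon\in\F/(\relstarplus)$ plus the two algebras $L^{(2)}_{6,7}(\eta)$, $\eta\in\{0,\omega\}$), i.e.\ $s+1$ (resp.\ $t+2$) isomorphism types --- far short of $4s$ (resp.\ $2s+4t$). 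In the actual classification the field-dependence is spread over \emph{six} base algebras: the families $L_{6,19}(\varepsilon)$ and $L_{6,21}(\varepsilon)$ with $\varepsilon\in\F^*/(\relstar)$ arise as step-$1$ descendants of $L_{5,8}$ and $L_{5,9}$, where the orbit problem lives on $1$-dimensional subspaces of their own cohomology spaces under non-reductive automorphism groups and is settled by direct manipulation of quadratic equations such as \eqref{eq591}--\eqref{eq592}, with no quadric geometry available; the family $L_{6,24}(\varepsilon)$ comes from $L_{4,2}$, whose relevant quadratic form is the $\GL_2\otimes\GL_2$-invariant (Segre) form on a $4$-dimensional $H^2$, see \eqref{tensor}, not the $6$-dimensional Klein form; and in characteristic~$2$ two further parametric families $L^{(2)}_{6,3}(\varepsilon)$ and $L^{(2)}_{6,4}(\varepsilon)$, $\varepsilon\in\F/(\relstarplus)$, come from $L_{5,6}$ and $L_{5,7}$ --- precisely the bases your plan files under ``routine, field-independent.'' Executing your plan as written would therefore undercount (the Klein problem alone cannot yield the $4s$), and in characteristic~$2$ it would also miss the sporadic algebras $L^{(2)}_{6,1}$, $L^{(2)}_{6,2}$, $L^{(2)}_{6,5}$, $L^{(2)}_{6,6}$ that are needed to raise the constant to~$30$.

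Two further points on the Klein step itself. First, your assertion that a $2$-subspace $W$ is classified by the similarity class of $q|_W$ is not automatic: $\GL(V)$ induces only \emph{proper} similitudes of the Klein form (it preserves the two rulings), so a Witt-type extension theorem for this subgroup would have to be proved; the paper instead avoids the issue by exhibiting explicit reducing matrices and then separating the resulting candidates with the invariants of Lemma~\ref{arflemma}. Second, in characteristic~$2$ the case analysis is a trichotomy, not your dichotomy: the Arf invariant is defined only when the polar form $f|_W$ is non-singular (so that a symplectic basis exists), and it accounts merely for the two-element parameter set $\{0,\omega\}$ of $L^{(2)}_{6,7}$ and $L^{(2)}_{6,8}$. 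The relation $\relstarplus$, which carries the whole $4t$ term, instead arises in the complementary case $f|_W\equiv 0$, where the separating invariant is the product $Q(b_1)Q(b_2)$ taken modulo $q\mapsto\alpha^2q+\beta^2$ (Lemma~\ref{arflemma}(iii)); your sketch conflates these two mechanisms.
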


In Section~\ref{ressect}, a list of the isomorphism classes of the 6-dimensional nilpotent Lie algebras over an arbitrary field is given. The first part of this theorem concerning fields of characteristic different from~2 was already proved in the article~\cite{artw} by the second author (see also~\cite{prew} for more details).

As stressed above, our paper treats fields of characteristic~2, and, for the first time, gives a full classification of 6-dimensional nilpotent Lie algebras over an arbitrary field of characteristic~2. As far as we are aware, in characteristic~2, the only existing classification of such Lie algebras was given over algebraically closed fields by Gong's Ph.D.\ dissertation~\cite{gong}. Comparing Gong's results to ours, we found that Gong's classification contains one mistake: namely, his Lie algebras $N_{6,2,10}$ and $(E)$ are both isomorphic to our Lie algebra $L_{6,24}(0)$ defined in Section~\ref{ressect}. Apart from this, our classification agrees with Gong's.\par

The original aim of the research presented in this paper was to extend the results of~\cite{artw} to fields of characteristic~2. As in the course of this work some proofs of~\cite{artw} were revised, we decided, in this paper, to present a full classification of 6-dimensional nilpotent Lie algebras that is valid over all fields. In addition, some results in~\cite{artw} relied on computer calculations (specifically, computing a Gr\"obner bases for  ideals in a polynomial rings), while the arguments of the present paper are all theoretical with no computer calculations involved. Nevertheless, we should mention here that several of the theoretical arguments in Section~\ref{orbsect} were inspired by Gr\"obner basis computations in the computational algebra system Magma~\cite{Magma} and it would have been significantly more difficult, maybe even impossible, to obtain the classification in Theorem~\ref{main} without performing such computations.\par
Our methodology, which is the same as in~\cite{artw}, is explained in Section~\ref{backg}. We construct the 6-dimensional nilpotent Lie algebras as certain central extensions, descendants in our terminology, of lower-dimensional algebras. To separate the isomorphism classes of the descendants, we use the action of the automorphism group on the subspaces of the second cohomology space. It is interesting to note that, in some examples, the automorphism group preserves a non-degenerate quadratic form on the second cohomology space and the isomorphism classes of the descendants can be characterized by purely geometric means. An example of this situation is the abelian Lie algebra $L$ with dimension~4, whose automorphism group $\GL(4,\F)$, by the Klein correspondence, preserves a quadratic form on the second cohomology space $H^2(L,\F)=(\F^4)\wedge(\F^4)$ with dimension~6. In characteristic different from~2, the 6-dimensional descendants of $L$ can be completely determined using the Gram determinant of the restriction of the quadratic form to the 2-dimensional subspaces of $H^2(L,\F)$. In characteristic~2, we determined these descendants using the Arf invariant of the restriction of the quadratic form to these 2-dimensional subspaces. See Sections~\ref{formsect} and~\ref{orbsect} for the details.
The use of the Klein correspondence in the classification of nilpotent Lie algebras of nilpotency class~2 was also explored in~\cite{stroppel}. 
An interesting byproduct of our work is the determination of the $\GL(4,\F)$-orbits on the 2-dimensional subspaces of $(\F^4)\wedge (\F^4)$ 
(Theorem~\ref{kleinth}).

Here is an outline of the paper. In Section \ref{backg} we describe the 
cohomological method that we use to classify nilpotent Lie algebras, which 
also appeared in \cite{artw}, \cite{skjelsund}, \cite{gong}. In Section \ref{ressect} we present the main result of this paper, that is the classification of the 6-dimensional nilpotent Lie algebras. Section \ref{formsect} has a number of results on bilinear and quadratic forms that we need. Then in Section \ref{orbsect} the main work is performed to prove the main result.\par

The main result of this paper can be accessed electronically using the 
{\sf LieAlgDB} package~\cite{liealgdb} of the computational 
algebra system {\sf GAP}~\cite{gap}.

\section{A summary of the method}\label{backg}

The main idea that we use here is to obtain the nilpotent Lie algebras of dimension $n$ as central extensions of Lie algebras of smaller dimension. The central extensions are defined using the second cohomology space, and the isomorphism classes of the central extensions correspond to the orbits of the automorphism group on the set of the subspaces of this cohomology space. This method has been described for Lie algebras by Skjelbred and Sund (\cite{skjelsund}). Similar ideas appear in the recent work concerning the classification of $p$-groups; see, e.g., \cite{eickob}, \cite{newobrvau}, \cite{obrien}, \cite{obvau}. We summarize the method in this section without giving proofs or explanations; the details can, for instance, be found in~\cite{artw}.\par
For a Lie algebra $L$, let $L^i$ denote the terms of the lower central series. If $L$ is nilpotent then $L^{i+1}=0$ with some $i$ and the smallest such $i$ is called the {\em nilpotency class} of $L$. The second term $L^2$ of the lower central series will usually be written as $L'$. We denote the center of $L$ by $C(L)$. Adapting terminology from~\cite{obrien} to our context, a Lie algebra $K$ is said to be a {\em descendant} of the Lie algebra $L$ if $K/C(K)\cong L$ and $C(K)\leq K'$. If $\dim C(K)=s$ then $K$ is also referred to as a {\em step-$s$ descendant}. A descendant of a nilpotent Lie algebra is nilpotent. Conversely, if $K$ is a finite-dimensional nilpotent Lie algebra over a field $\F$, then $K$ is either a descendant of a smaller-dimensional nilpotent Lie algebra, or $K=K_1\oplus\F$ where $K_1$ is an ideal of $K$ and $\F$ is viewed as a 1-dimensional Lie algebra. Hence determining the isomorphism types of the descendants of the nilpotent Lie algebras with dimension at most~5 suffices for the classification of the 
nilpotent Lie algebras with dimension~6.\par 
The main idea of the method is that, for a nilpotent Lie algebra $L$ over a field $\F$, the isomorphism types of the descendants
of $L$ are in 1-1 correspondence with the $\Aut(L)$-orbits of some of the subspaces of the second cohomology space $H^2(L,\F)$. The second cohomology spaces for nilpotent Lie algebras are defined as follows. For a vector space $V$, let $Z^2(L,V)$ denote the set of alternating bilinear maps $\vartheta:L\times L\rightarrow V$ with the property that \[\vartheta([x_1,x_2],x_3)+\vartheta([x_3,x_1],x_2)+\vartheta([x_2,x_3],x_1)=0 \text{ for all $x_1,\ x_2,\ x_3\in L$}.\]
The set $Z^2(L,V)$ is viewed as a vector space over $\F$ and the elements of $Z^2(L,V)$ are said to be {\em cocycles}. We define, for a linear map $\nu \colon L\to V$, a map $\eta_\nu\colon L\times L\rightarrow V$ as $\eta_\nu(x,y)=\nu([x,y])$. The set $\{\eta_\nu\ |\ \mbox{$\nu:L\rightarrow V$ is linear}\}$ is denoted by $B^2(L,V)$. It is routine to check that $B^2(L,V)$ is a subspace of $Z^2(L,V)$, and the elements of $B^2(L,V)$ are called {\em coboundaries}. The {\em second cohomology space} $H^2(L,V)$ is defined as the quotient $Z^2(L,V)/B^2(L,V)$. \par
The vector spaces defined in the previous paragraph can be viewed as $\Aut(L)$ modules. Indeed, for $\varphi\in \Aut(L)$ and $\vartheta\in Z^2(L,V)$ define $\varphi\vartheta\in Z^2(L,V)$ by the equation $(\varphi\vartheta)(x,y) = \vartheta(\varphi(x),\varphi(y))$. The action $\vartheta\mapsto \varphi\vartheta$ makes $Z^2(L,V)$ an $\Aut(L)$-module and it is easy to see that $B^2(L,V)$ is an $\Aut(L)$-submodule. Hence the quotient $H^2(L,V)$ can also be viewed as an $\Aut(L)$-module.\par
Let $L$ be a Lie algebra and $V$ a vector space over a field $\F$. For $\vartheta\in Z^2(L,V)$, define a Lie algebra $L_\vartheta$ 
as follows. The underlying space of $L_\vartheta$ is $L\oplus V$. The product of two elements $x+v,\ y+u\in L_\vartheta$, is defined
as $[x+v,y+w]=[x,y]_L+\vartheta(x,y)$ where $[x,y]_L$ denotes the product in $L$. Then $L_\vartheta$ is a Lie algebra and $V$ is an ideal of $L_\vartheta$ such that $V\leq C(L_\vartheta)$.  In addition, $L\cong L_\vartheta/V$, and hence $L_\vartheta$ is a central extension of $L$. Further, if $\vartheta_1,\ \vartheta_2\in Z^2(L,V)$ such that $\vartheta_1-\vartheta_2\in B^2(L,V)$ then $L_{\vartheta_1}\cong L_{\vartheta_2}$, and so the isomorphism type of $L_\vartheta$ only depends on the element $\vartheta+B^2(L,V)$ of $H^2(L,V)$. Conversely let $K$ be a Lie algebra such that $C(K)\neq 0$, and set $V=C(K)$ and $L=K/C(K)$. Let $\pi \colon K\to L$ be the projection map. Choose an injective linear map $\sigma \colon L\to K$ such that $\pi(\sigma(x))=x$ for all $x\in L$. Define $\vartheta \colon L\times L\to V$ by $\vartheta(x,y) = [ \sigma(x), \sigma(y) ] -\sigma([x,y])$. Then $\vartheta$ is a cocycle such that $K\cong L_\vartheta$.  Though $\vartheta$ depends on the choice of $\sigma$, the coset $\vartheta+B^2(L,B)$ is independent of $\sigma$. Hence the central extension $K$ of $L$ determines a well-defined element of $H^2(L,V)$. \par
Let us now fix a basis $\{e_1,\ldots,e_s\}$ of $V$. A cocycle $\vartheta \in Z^2(L,V)$ can be written as 
\[\vartheta(x,y) = \sum_{i=1}^s \vartheta_i(x,y) e_i,\]
where $\vartheta_i \in Z^2(L,\F)$. Furthermore, $\vartheta$ is a coboundary if and only if all $\vartheta_i$ are. For $\vartheta\in Z^2(L,V)$, let $\vartheta^\perp$ denote the {\em radical} of $\vartheta$; that is, the set of elements $x\in L$ such that $\vartheta(x,y)=0$ for all $y\in L$. Then 
\[\vartheta^\perp=\bigcap_{\eta\in\langle \vartheta_1,\ldots,\vartheta_s\rangle}
\eta^\perp=\vartheta_1^\perp\cap\cdots\cap\vartheta_s^\perp.\]

\begin{thm}[Lemmas~2--4 in \cite{artw}]\label{coctheorem}
Let $L$ be a Lie algebra, let $V$ be a vector space with fixed basis $\{e_1,\ldots,e_s\}$ over a field $\F$, and let $\vartheta,\ \eta$ be elements of $Z^2(L,V)$. 
\begin{enumerate}
\item[(i)] The Lie algebra $L_\vartheta$ is a step-$s$ descendant of $L$ if and only if $\vartheta^\perp\cap C(L)=0$ and the image of the subspace $\langle \vartheta_1,\ldots,\vartheta_s\rangle$ in $H^2(L,\F)$ is $s$-dimensional .
\item[(ii)]
Suppose that $\eta$ is an other element of $Z^2(L,V)$ and that $L_\vartheta,\ L_\eta$ are descendants of $L$. Then $L_\vartheta\cong L_\eta$ if and only if images of the subspaces $\langle \vartheta_1,\ldots,\vartheta_s\rangle$ and $\langle \eta_1,\ldots,\eta_s\rangle$ in $H^2(L,\F)$ are in the same orbit under the action of $\Aut(L)$. 
\end{enumerate}
\end{thm}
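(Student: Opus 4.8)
The plan is to prove both statements by working directly with the bracket on $L_\vartheta=L\oplus V$, using two structural facts: that $V$ is always a central ideal of $L_\vartheta$, and that any abstract isomorphism between descendants must preserve the intrinsically defined centre. For part (i) I would first compute the centre explicitly. Since $[x+v,y+w]=[x,y]_L+\vartheta(x,y)$ with the two summands lying in the complementary subspaces $L$ and $V$, an element $x+v$ is central precisely when $[x,y]_L=0$ and $\vartheta(x,y)=0$ for all $y$; hence $C(L_\vartheta)=(C(L)\cap\vartheta^\perp)\oplus V$. As $\dim V=s$, requiring $\dim C(L_\vartheta)=s$ is the same as requiring $C(L)\cap\vartheta^\perp=0$, and in that case $C(L_\vartheta)=V$, so automatically $L_\vartheta/C(L_\vartheta)=L_\vartheta/V\cong L$. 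The one remaining descendant condition is $C(L_\vartheta)=V\leq L_\vartheta'$, and this is where the cohomological hypothesis enters.

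To handle $V\leq L_\vartheta'$ I would pass to the dual. A functional $(\phi,f)\in L^*\oplus V^*$ annihilates $L_\vartheta'$ if and only if $\phi([x,y]_L)+f(\vartheta(x,y))=0$ for all $x,y$; writing $f=\sum_i c_ie_i^*$, this reads $\sum_i c_i\vartheta_i=-\eta_\phi\in B^2(L,\F)$. Thus $V\leq L_\vartheta'$ --- equivalently, no functional annihilating $L_\vartheta'$ is nonzero on $V$ --- holds exactly when the only combination $\sum_i c_i\vartheta_i$ that lies in $B^2(L,\F)$ is the trivial one, i.e.\ when $\vartheta_1,\dots,\vartheta_s$ are linearly independent modulo $B^2(L,\F)$. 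This is precisely the condition that the image of $\langle\vartheta_1,\dots,\vartheta_s\rangle$ in $H^2(L,\F)$ be $s$-dimensional, completing part (i).

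For part (ii), the point is that when $L_\vartheta$ and $L_\eta$ are descendants, part (i) gives $C(L_\vartheta)=C(L_\eta)=V$, so any isomorphism $\Phi\colon L_\vartheta\to L_\eta$ satisfies $\Phi(V)=V$. Hence $\psi:=\Phi|_V\in\GL(V)$, and $\Phi$ induces an automorphism $\varphi\in\Aut(L)$ on $L_\vartheta/V=L=L_\eta/V$, so $\Phi(x+v)=\varphi(x)+\mu(x)+\psi(v)$ for some linear $\mu\colon L\to V$. Expanding $\Phi([x+v,y+w])=[\Phi(x+v),\Phi(y+w)]$ and using that $\varphi$ is an automorphism and that $V$ is central, all terms cancel except the identity $\psi(\vartheta(x,y))=\eta(\varphi(x),\varphi(y))-\mu([x,y]_L)$, that is, $\psi\circ\vartheta=\varphi\eta-\eta_\mu$. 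Reading this in coordinates shows each $\varphi\eta_j$ is congruent modulo $B^2(L,\F)$ to a combination $\sum_i P_{ji}\vartheta_i$ with $(P_{ji})$ invertible (because $\psi$ is), so $\varphi$ carries the image of $\langle\eta_1,\dots,\eta_s\rangle$ onto the image of $\langle\vartheta_1,\dots,\vartheta_s\rangle$ in $H^2(L,\F)$; this is the forward direction. Conversely, an element $\varphi\in\Aut(L)$ taking one image subspace to the other supplies, through the same congruences, an invertible matrix $(P_{ji})$ defining $\psi$ and coboundary data defining $\mu$, and then $\Phi(x+v)=\varphi(x)+\mu(x)+\psi(v)$ is checked to be an isomorphism by running the identity backwards.

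I expect the main obstacle to be the second half of part (i): translating the internal condition $V\leq L_\vartheta'$ into the external statement about $\langle\vartheta_1,\dots,\vartheta_s\rangle$ in $H^2(L,\F)$. The duality argument above is the cleanest route, and the delicate point is making the coboundary $\eta_\phi$ appear with the right role. In part (ii) the only subtlety is the identity $\Phi(V)=V$, which is exactly where the descendant hypothesis is indispensable --- it guarantees via part (i) that $V$ is the whole centre and so is preserved by every isomorphism; the remaining manipulations, including keeping track of $\varphi$ versus $\varphi^{-1}$ and the invertibility of $(P_{ji})$, are routine.
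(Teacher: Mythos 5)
Your proof is correct: the computation $C(L_\vartheta)=(C(L)\cap\vartheta^\perp)\oplus V$, the duality translation of $V\leq L_\vartheta'$ into linear independence of $\vartheta_1,\ldots,\vartheta_s$ modulo $B^2(L,\F)$, and the decomposition $\Phi(x+v)=\varphi(x)+\mu(x)+\psi(v)$ of an isomorphism (which must preserve $V$ since, by the descendant hypothesis, $V$ is the full centre on both sides) are precisely the ingredients of the standard Skjelbred--Sund argument. The paper itself states this theorem without proof, deferring to Lemmas~2--4 of \cite{artw}, and your reconstruction is essentially the same argument given there.
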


A subspace $U$ of $H^2(L,\F)$ is said to be allowable if $\bigcap_{\vartheta\in U}\vartheta^\perp\cap C(L)=0$. By Theorem~\ref{coctheorem}, there is a one-to-one correspondence between the set of isomorphism types of step-$s$  descendants of $L$ and the $\Aut(L)$-orbits on the $s$-dimensional allowable subspaces of $H^2(L,\F)$. Hence the classification of 6-dimensional nilpotent Lie algebras requires that we determine these orbits for all nilpotent Lie algebras of dimension at most~5. The determination of these orbits is achieved in Section~\ref{orbsect}.

\section{The 6-dimensional nilpotent Lie algebras}\label{ressect}

Let $\F$ be an arbitrary field. In order to classify the 6-dimensional nilpotent Lie algebras over $\F$, we determine the isomorphism classes of the 6-dimensional descendants of the nilpotent Lie algebras with dimension at most 5. In this section
we summarize the result by listing these isomorphism classes for each of the Lie algebras with dimension at most 5, while in the 
Section~\ref{orbsect} we provide with a detailed proof. The notation we use to describe nilpotent Lie algebras of dimension at most~5 is the same as in~\cite{artw}. \par
Unlike the 5-dimensional algebras, nilpotent Lie algebras of dimension~6 cannot be described uniformly over all fields. In some cases, the isomorphism types of descendants will depend on a parameter. In order to describe these cases, we need some notation. First, for a field $\F$, let $\F^*$ denote the multiplicative group of non-zero elements of $\F$. If $\F$ is a field, then let $\relstar$ denote the equivalence relation on $\F$ defined as $\alpha\relstar \beta$ if and only if $\alpha= \gamma^2\beta$ with some $\gamma\in\F^*$. If $\char\F=2$ then define the equivalence relation $\relstarplus$ as $\alpha\relstarplus\beta$ if and only if $\alpha=\gamma^2\beta+\delta^2$ with some $\gamma\in\F^*$ and  $\delta\in\F$. Using that $\char\F=2$, it is easy to show that $\relstarplus$ is indeed an equivalence relation. For a set $X$ and equivalence relation $\sim$, let $X/(\sim)$ denote a transversal
of the equivalence relation $\sim$; that is,  $X/(\sim)$ is a set that contains precisely one element from each of the equivalence classes of $\sim$. Now let $\F$ be a field of characteristic 2. View $\F$ as a vector space over $\F_2$, the map $\psi \colon \F\to\F$ defined by $\psi(x)=x^2+x$ is $\F_2$-linear with kernel $\F_2$. Let $\relpsi$ denote the equivalence relation that corresponds to the 
coset partition of $\F$ with respect to the subspace $\psi(\F)$.
Note that if $\F$ is a finite field, then $\psi(\F)$ is a subspace of codimension~1, and in 
particular $|\F/(\relpsi)|=2$.  On the other hand, if $\F$ is algebraically closed 
then $\psi$ is surjective and $|\F/(\relpsi)|=1$.

Following and extending the notation in~\cite{artw}, 
the nilpotent Lie algebras in this paper are denoted by $L_{d,k}$, 
$L_{d,k}(\varepsilon)$, $L^{(2)}_{d,k}$, or 
$L^{(2)}_{d,k}(\varepsilon)$ 
where $d$ is the dimension of the algebra, $k$ is its index among the nilpotent Lie algebras with dimension $d$, $\varepsilon$ is a possible parameter, 
and the superscript ``$(2)$'' refers to the fact that the algebra
is defined over a field of characteristic~2. 
The list of nilpotent Lie algebras with dimension at most~5, described in the same notation, can be found in~\cite{artw}. In particular, there are 9 isomorphism types of nilpotent Lie algebras of dimension~5: $L_{5,1},\ldots,L_{5,9}$. Hence there are 9 isomorphism types of nilpotent Lie algebras with dimension~6 that are not descendants of smaller-dimensional Lie algebras, namely $L_{6,1},\ldots,L_{6,9}$, where $L_{6,i}=L_{5,i}\oplus\F$.\par

Next we describe the 6-dimensional descendants of nilpotent Lie algebras with dimension at most~5. The Lie algebras in this section are given with  multiplication tables with respect to  fixed bases with trivial products  of the form $[x_i,x_j]=0$ omitted.
With respect to the list of~\cite{artw} we have made a few small changes.
The multiplication table of $L_{6,19}(\varepsilon)$, for $\varepsilon\neq 0$, 
is different from (but isomorphic to) the Lie algebra denoted with the same 
symbol in~\cite{artw}. The Lie algebras $L_{6,19}(0)$  and 
$L_{6,21}(0)$ 
from~\cite{artw} are 
denoted here by $L_{6,27}$ and $L_{6,28}$, respectively. We have made these changes because the structure
of $L_{6,k}(\epsilon)$, $k=19,21$, is different for $\varepsilon=0$ and
$\varepsilon\neq 0$. 
Furthermore, in characteristic~2 a few new algebras appear that are
not contained in~\cite{artw}. These Lie algebras are $L_{6,k}^{(2)}$ or 
$L_{6,k}^{(2)}(\varepsilon)$ (here $k\in\{1,\ldots, 8\}$).

\subsection*{Step-1 descendants of $5$-dimensional Lie algebras}

\begin{itemize}
\item[(5/1)] The abelian Lie algebra $L_{5,1}$ has no step-1 descendants.
\item[(5/2)]
The Lie algebra $L_{5,2}=\langle x_1,\ldots,x_5\mid [ x_1,x_2]=x_3\rangle$ has only one isomorphism class of step-$1$ descendants namely
\[L_{6,10}=\langle x_1,\ldots,x_6\mid [x_1,x_2]=x_3, [x_1,x_3]=x_6, [x_4,x_5]=x_6\rangle.\]
\item[(5/3)]
The Lie algebra $L_{5,3}=\langle x_1,\ldots,x_5\mid [x_1,x_2]=x_3,[x_1,x_3]=x_4\rangle$  has two isomorphism classes of step-$1$ descendants namely
\begin{eqnarray*}
L_{6,11}&=&\langle x_1,\ldots,x_6\mid [x_1,x_2]=x_3, [x_1,x_3]=x_4, [x_1,x_4]=x_6, [x_2,x_3]=x_6,[x_2,x_5]=x_6\rangle\mbox{ and}\\
L_{6,12}&=&\langle x_1,\ldots,x_6\mid [x_1,x_2]=x_3, [x_1,x_3]=x_4, [x_1,x_4]=x_6, 
[x_2,x_5]=x_6\rangle.
\end{eqnarray*}
\item[(5/4)]
The Lie algebra $L_{5,4}=\langle x_1,\ldots,x_5\mid [x_1,x_2]=x_5,[x_3,x_4]=x_5\rangle$ has no step-1 descendants.
\item[(5/5)] 
Let $L_{5,5}=\langle x_1,\ldots,x_5\mid  [x_1,x_2]=x_3,[x_1,x_3]=x_5,[x_2,x_4]=x_5\rangle$. If $\char \F\neq 2$, then $L_{5,5}$ has a unique isomorphism type of step-1 descendants, namely 
\[L_{6,13}=\langle x_1,\ldots,x_6\mid [x_1,x_2]=x_3,[x_1,x_3]=x_5,[x_1,x_5]=x_6,[x_2,x_4]=x_5, [x_3,x_4]=x_6\rangle.\]
If $\char \F=2$ then $L_{5,5}$ has two isomorphism classes of step-1 descendants, namely $L_{6,13}$ above and 
\[L_{6,1}^{(2)}=\langle x_1,\ldots,x_6\mid [x_1,x_2]=x_3,[x_1,x_3]=x_5,[x_1,x_5]=x_6,[x_2,x_4]=x_5+x_6, [x_3,x_4]=x_6\rangle.\]
\item[(5/6)]
Set $L_{5,6}=\langle x_1,\ldots,x_6\mid [x_1,x_2]=x_3,[x_1,x_3]=x_4,[x_1,x_4]=x_5,[x_2,x_3]=x_5\rangle$. If $\char \F\neq 2$, then $L_{5,6}$ has two isomorphism classes of step-1 descendants, namely 
\begin{eqnarray*}
L_{6,14}&=&\langle x_1,\ldots,x_6\mid [x_1,x_2]=x_3, [x_1,x_3]=x_4, [x_1,x_4]=x_5,\\ 
&&{[x_2,x_3]}=x_5, {[x_2,x_5]}=x_6, [x_3,x_4]=-x_6\rangle;\\
L_{6,15}&=&\langle x_1,\ldots,x_6\mid [x_1,x_2]=x_3, [x_1,x_3]=x_4, {[x_1,x_4]}=x_5,\\
&&{[x_1,x_5]}=x_6, [x_2,x_3]=x_5, [x_2,x_4]=x_6\rangle.
\end{eqnarray*}
If $\char \F=2$ then $L_{6,15}$ and the Lie algebras 
\begin{eqnarray*}
L_{6,2}^{(2)}&=&\langle x_1,\ldots,x_6\mid [x_1,x_2]=x_3,[x_1,x_3]=x_4,[x_1,x_4]=x_5,\\
&&{[x_1,x_5]}=x_6,{[x_2,x_3]}=x_5+ x_6,[x_2,x_4]=x_6\rangle;\\
L_{6,3}^{(2)}(\varepsilon)&=&\langle x_1,\ldots,x_6 \mid[x_1,x_2]=x_3,[x_1,x_3]=x_4,[x_1,x_4]=x_5,\\
&&{[x_2,x_3]}=x_5+\varepsilon x_6, {[x_2,x_5]}=x_6,[x_3,x_4]=x_6\rangle\mbox{ with }\varepsilon\in\F/(\relstarplus),
\end{eqnarray*}
form a complete and irredundant list of representatives of the isomorphism classes of step-1 descendants of $L_{5,6}$.
\item[(5/7)] Set $L_{5,7}=\langle x_1,\ldots,x_5\mid [x_1,x_2]=x_3,[x_1,x_3]=x_4,[x_1,x_4]=x_5\rangle$. If $\char \F\neq 2$, then 
the Lie algebra $L_{5,7}$ has three isomorphism classes of step-$1$ descendants namely
\begin{eqnarray*}
L_{6,16}&=&\langle x_1,\ldots,x_6\mid [x_1,x_2]=x_3, [x_1,x_3]=x_4, {[x_1,x_4]}=x_5,{[x_2,x_5]}=x_6, [x_3,x_4]=-x_6\rangle;\\
L_{6,17}&=&\langle x_1,\ldots,x_6\mid [x_1,x_2]=x_3, [x_1,x_3]=x_4, [x_1,x_4]=x_5, [x_1,x_5]=x_6,[x_2,x_3]=x_6\rangle;\\
L_{6,18}&=&\langle x_1,\ldots,x_6\mid [x_1,x_2]=x_3, [x_1,x_3]=x_4, [x_1,x_4]=x_5, [x_1,x_5]=x_6\rangle.
\end{eqnarray*}
If $\char \F=2$ then $L_{6,17}$, $L_{6,18}$ and the Lie algebras
\begin{eqnarray*}
L_{6,4}^{(2)}(\varepsilon)=\langle x_1,\ldots,x_6\mid [x_1,x_2]=x_3,[x_1,x_3]=x_4,[x_1,x_4]=x_5,\\
{[x_2,x_3]}=\varepsilon x_6,{[x_2,x_5]}=x_6,[x_3,x_4]=x_6\rangle,
\end{eqnarray*}
where $\varepsilon$ runs through the elements of $\F/(\relstarplus)$, form a complete and irredundant set of representatives of the isomorphism types of step-1 descendants of $L_{5,7}$. 
\item[(5/8)]Set $L_{5,8}=\langle x_1,\ldots,x_5\mid [x_1,x_2]=x_4,[x_1,x_3]=x_5\rangle$. If $\char \F\neq 2$, then the Lie algebras 
\begin{eqnarray*}
L_{6,20}&=&\langle x_1,\ldots,x_6\mid  [x_1,x_2]=x_4, [x_1,x_3]=x_5, [x_1,x_5]=x_6, [x_2,x_4]=x_6\rangle;\\
L_{6,19}(\varepsilon)&=&\langle x_1,\ldots,x_6\mid  [x_1,x_2]=x_4, [x_1,x_3]=x_5, [x_1,x_5]=x_6, [x_2,x_4]=x_6, [x_3,x_5]=\varepsilon x_6\rangle,\\
&& \mbox{ where $\varepsilon\in \F^*/(\relstar)$},
\end{eqnarray*}
form a complete and irredundant set of representatives of the isomorphism classes of step-1 descendants of $L_{5,8}$. If $\char \F=2$ then such a set of representatives is formed by the Lie algebras $L_{6,19}(\varepsilon)$ with $\varepsilon\in\F^*/(\relstar)$, $L_{6,20}$ and the Lie algebra
\[L_{6,5}^{(2)}=\langle x_1,\ldots,x_6\mid [x_1,x_2]=x_4,[x_1,x_3]=x_5,{[x_2,x_5]}=x_6,[x_3,x_4]=x_6\rangle.\]
\item[(5/9)] Set $L_{5,9}=\langle x_1,\ldots,x_5\mid [x_1,x_2]=x_3,[x_1,x_3]=x_4, [x_2,x_3]=x_5\rangle$. If $\char \F\neq 2$ then the Lie algebras
\[L_{6,21}(\varepsilon)=\langle x_1,\ldots,x_6\mid [x_1,x_2]=x_3, [x_1,x_3]=x_4, [x_1,x_4]=x_6, [x_2,x_3]=x_5, 
[x_2,x_5]= \varepsilon x_6\rangle,\]
where $\varepsilon$ runs through the elements of $\F^*/(\relstar)$ form a complete and irredundant set of representatives of the isomorphism classes of step-1 descendants of $L_{5,9}$. If $\char \F=2$ then such a set of representatives is formed by the Lie algebras $L_{6,21}(\varepsilon)$ with $\varepsilon\in \F^*/(\relstar)$ and by the Lie algebra
\[L_{6,6}^{(2)}=\langle x_1,\ldots,x_6\mid[x_1,x_2]=x_3, [x_1,x_3]=x_4, [x_1,x_5]=x_6, [x_2,x_3]=x_5, 
[x_2,x_4]=x_6\rangle.\]
\end{itemize}

\subsection*{Step-2 descendants of $4$-dimensional Lie algebras}
\begin{itemize}
\item[(4/1)] Let $L_{4,1}$  be the  abelian Lie algebra  of dimension $4$. If $\char \F\neq 2$ the following is a complete and irredundant list of the representatives of the isomorphism classes of the step-2 descendants of $L_{4,1}$:
\[ L_{6,22}(\varepsilon) =\langle x_1,\ldots,x_6\mid [x_1,x_2]=x_5, [x_1,x_3]=x_6, [x_2,x_4]=\varepsilon x_6,
[x_3,x_4]=x_5\rangle\]
where $\varepsilon\in \F/(\relstar)$. If $\char \F=2$, then such a list is formed by the Lie algebras $L_{6,22}(\nu)$ as above, where, in this case, $\nu \in \F/(\relstarplus)$, and by the Lie algebras
\[ L_{6,7}^{(2)}(\eta) =\langle x_1,\ldots,x_6\mid [x_1,x_2]=x_5, [x_1,x_3]=x_6, [x_2,x_4]=\eta x_6,
[x_3,x_4]=x_5+x_6\rangle\]
where $\eta\in\F/(\relpsi)$ 
\item[(4/2)] Set $L_{4,2}=\langle x_1,\ldots,x_4\mid [x_1,x_2]=x_3\rangle$. If $\char \F\neq 2$, then the following Lie algebras form a complete and irredundant set of representatives of the isomorphism classes of the step-2 descendants of $L_{4,2}$:
\begin{eqnarray*}
L_{6,27}&=&\langle x_1,\ldots,x_6\mid [x_1,x_2]=x_3, [x_1,x_3]=x_5, [x_2,x_4]= x_6 \rangle;\\
L_{6,23}&=&\langle x_1,\ldots,x_6\mid [x_1,x_2]=x_3, [x_1,x_3]=x_5, [x_1,x_4]=x_6, [x_2,x_4]= x_5\rangle;\\
L_{6,25}&=&\langle x_1,\ldots,x_6\mid [x_1,x_2]=x_3, [x_1,x_3]=x_5, [x_1,x_4]=x_6\rangle;\\
L_{6,24}(\varepsilon)&=&\langle x_1,\ldots,x_6\mid [x_1,x_2]=x_3, [x_1,x_3]=x_5, {[x_1,x_4]}=\varepsilon x_6,\\ 
&&{[x_2,x_3]}=x_6, [x_2,x_4]= x_5\rangle\mbox{ where $\varepsilon\in \F/(\relstar)$}.
\end{eqnarray*}
If $\char \F=2$ then such a set of representatives is formed by the Lie algebras $L_{6,27}$, $L_{6,23}$, $L_{6,24}(\nu)$, where 
$\nu \in \F/(\relstarplus)$, $L_{6,25}$ and by the Lie algebras
\begin{eqnarray*}
L_{6,8}^{(2)}(\eta)=\langle x_1,\ldots,x_6&\mid& [x_1,x_2]=x_3,[x_1,x_3]=x_5,[x_1,x_4]=\eta x_6,\\
&&{[x_2,x_3]}=x_6,[x_2,x_4]=x_5+x_6\rangle \mbox{ where $\eta\in \F/(\relpsi)$}; 
\end{eqnarray*}
\item[(4/3)] The Lie algebra $L_{4,3}=\langle x_1,\ldots,x_4\mid[x_1,x_2]=x_3,[x_1,x_3]=x_4\rangle$ has only one isomorphism class of step-$2$ descendants namely
\[L_{6,28}=\langle x_1,\ldots,x_6\mid [x_1,x_2]=x_3, [x_1,x_3]=x_4, [x_1,x_4]=x_5, [x_2,x_3]=x_6\rangle.\]
\end{itemize}

\subsection*{Step-3 descendants of $3$-dimensional Lie algebras}

\begin{itemize}
\item[(3/1)] The abelian Lie algebra $L_{3,1}$ has a unique isomorphism type of step-3 descendants, namely
\[L_{6,26}=\langle x_1,\ldots,x_6\ |\ [x_1,x_2]=x_4,\ [x_1,x_3]=x_5,\ 
[x_2,x_3]=x_6\rangle.\]
\item[(3/2)] The Lie algebra $L_{3,2}=\langle x_1,x_2,x_3 | [x_1,x_2]=x_3\rangle$ has no step-3 descendants.
\end{itemize}

By explicitly listing the isomorphism classes of 6-dimensional nilpotent 
Lie algebras, the following theorem gives a summary of the results stated in 
this section. 
Recall that in a field $\F$ of characteristic~2,
$\relpsi$ denotes the equivalence relation that corresponds to the coset partition of $\F$ with respect to the subspace 
$\{x^2+x\ |\ x\in\F\}$

\begin{thm}\label{classif}
\begin{enumerate}
\item[(I)] Over a field $\F$ of characteristic different from~$2$, the list 
of the 
isomorphism types of $6$-dimensional 
nilpotent Lie algebras is the following:
$L_{5,k}\oplus\F$ with $k\in\{1,\ldots,9\}$; 
$L_{6,k}$ with $k\in\{10,\ldots,18,20,23,25,\ldots,28\}$; $L_{6,k}(\varepsilon_1)$
with $k=\{19,21\}$ and $\varepsilon_1\in\F^*/(\relstar)$;
$L_{6,k}(\varepsilon_2)$ with $k\in\{22,24\}$ and 
$\varepsilon_2\in\F/(\relstar)$. 
\item[(II)] Over a field $\F$ of characteristic~$2$, 
the isomorphism types of $6$-dimensional 
nilpotent Lie algebras are $L_{5,k}\oplus\F$ 
with $k\in\{1,\ldots,9\}$;
$L_{6,k}$ with $k\in\{10,\ldots,13,15,17,18,20,23,25,\ldots,28\}$;
$L_{6,k}(\varepsilon_1)$ with $k=\{19,21\}$ and $\varepsilon_1\in\F^*/(\relstar)$;
$L_{6,k}(\varepsilon_2)$ with $k=\{22,24\}$ and $\varepsilon_2\in\F/(\relstarplus)$;
$L_{6,k}^{(2)}$ with $k=\{1,2,5,6\}$;
$L_{6,k}^{(2)}(\varepsilon_3)$ with 
$k=\{3,4\}$ and $\varepsilon_3\in\F/(\relstarplus)$;
$L_{6,k}^{(2)}(\varepsilon_4)$
with $k\in\{7,8\}$ and $\varepsilon_4\in\F/(\relpsi)$ 
\end{enumerate}
\end{thm}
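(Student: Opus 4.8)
The plan is to obtain Theorem~\ref{classif} as a direct assembly of the per-progenitor classifications recorded in items (5/1)--(5/9), (4/1)--(4/3) and (3/1)--(3/2) of this section, whose proofs are carried out in Section~\ref{orbsect}. The organising principle is the dichotomy supplied by the Skjelbred--Sund method of Section~\ref{backg}: every finite-dimensional nilpotent Lie algebra $K$ over $\F$ either decomposes as $K=K_1\oplus\F$ with $K_1$ an ideal, or is a descendant of a lower-dimensional nilpotent Lie algebra. I would first dispose of the decomposable case. Iterating the decomposition, a $6$-dimensional nilpotent $K$ that is not a descendant is of the form $L\oplus\F$ with $L$ a $5$-dimensional nilpotent Lie algebra, hence isomorphic to exactly one $L_{5,k}\oplus\F$, $k\in\{1,\dots,9\}$; these are pairwise non-isomorphic because $L_{5,1},\dots,L_{5,9}$ are. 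This yields the family $L_{5,k}\oplus\F$ appearing in both parts of the statement.

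For the descendant case I would record the elementary constraint that a step-$s$ descendant $K$ of $L$ satisfies $\dim C(K)=s$ and $\dim(K/K')=\dim(L/L')$, so $K$ and $L$ have the same minimal number $d$ of generators. Since $\dim K=\dim L+s$ with $s\geq 1$, a $6$-dimensional descendant arises only from the pairs $(\dim L,s)\in\{(5,1),(4,2),(3,3),(2,4),(1,5)\}$. The last two pairs are impossible: a step-$4$ descendant of the $2$-dimensional abelian algebra, or a step-$5$ descendant of the $1$-dimensional algebra, forces $C(K)=K'$ with $\dim K'$ equal to $4$, respectively $5$, whereas $C(K)=K'$ means $[K,K']=0$, so $K$ has nilpotency class $2$ and $\dim K'\leq\binom{d}{2}\leq 1$, a contradiction. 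Thus only the $5$-, $4$- and $3$-dimensional progenitors contribute, and their step-$1$, step-$2$ and step-$3$ descendants are exactly those listed in the three groups of items, by Theorem~\ref{coctheorem} and the orbit computations of Section~\ref{orbsect}.

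Next I would assemble these lists and check that there is no redundancy between different families. The key observation is that both $C(K)$ and the quotient $K/C(K)$ are isomorphism invariants of $K$; hence descendants of non-isomorphic progenitors, or of the same progenitor with different step, are automatically non-isomorphic, and each listed descendant is distinct from every $L_{5,k}\oplus\F$ because the condition $C(K)\leq K'$ is itself isomorphism-invariant while in $L_{5,k}\oplus\F$ the central $\F$-summand is not contained in the derived subalgebra, so $L_{5,k}\oplus\F$ is never a descendant. Within a single progenitor the quoted item already provides a complete and irredundant list of representatives, so no further checking is needed there. Collecting the parameter-free descendants gives the $L_{6,k}$ with the stated index sets, while the parametrised families inherit their parameter ranges ($\relstar$, $\relstarplus$, or $\{0,\omega\}$) verbatim from the corresponding items; this accounts for the switch from $\F^*/(\relstar)$ and $\F/(\relstar)$ in characteristic different from~$2$ to $\F/(\relstarplus)$ and $\{0,\omega\}$ in characteristic~$2$, as well as for the new algebras $L^{(2)}_{6,k}$.

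Finally I would separate the two characteristics. Part~(I) is exactly the classification of~\cite{artw} and needs only to be matched against the relabelling described at the start of this section, in particular that $L_{6,19}(0)$ and $L_{6,21}(0)$ of~\cite{artw} are here denoted $L_{6,27}$ and $L_{6,28}$ and so no longer occur as the $\varepsilon=0$ members of the families $19,21$; part~(II) collects the characteristic-$2$ branches of the items. I expect the only genuine content at this stage to be the cross-family disjointness argument together with the exclusion of the $(2,4)$ and $(1,5)$ progenitor pairs, everything else being bookkeeping. The substantive difficulty, namely the determination of the $\Aut(L)$-orbits underlying each item (and with it the passage from $\relstar$ to $\relstarplus$ in characteristic~$2$), is deferred to Section~\ref{orbsect}, and that is where the real obstacle lies.
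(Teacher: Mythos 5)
Your proposal is correct and follows essentially the same route as the paper, which obtains Theorem~\ref{classif} directly from the per-progenitor lists of Section~\ref{ressect} (proved via the orbit computations of Section~\ref{orbsect}) combined with the descendant/direct-summand dichotomy of Section~\ref{backg}. You merely spell out bookkeeping the paper leaves implicit — the exclusion of the $(2,4)$ and $(1,5)$ progenitor pairs and the cross-family disjointness via the isomorphism invariance of $C(K)$ and $K/C(K)$ — and these added arguments are sound.
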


Theorem~\ref{classif} follows from the statements concerning the 
descendants of this section. These statements are proved in 
Section~\ref{orbsect}. Noting that $\F/(\relstar)=
\F^*/(\relstar)\cup\{0\}$, the main Theorem~\ref{main} is a 
consequence of Theorem~\ref{classif}. If $\F$ is an algebraically closed field
or a perfect field of characteristic~2, then $s=1$. If $\F$ is a finite field
of size $q$ with $q$ odd then $s=2$. Further, if $\F$ is a perfect 
field of characteristic~2 then $t=1$. Finally, if $\F$ is a finite field of characteristic 2
then $u=2$, and if $\F$ is algebraically closed of characteristic 2 then $u=1$.
This gives that the number 
of isomorphism types of 6-dimensional nilpotent Lie algebras is 34 over 
a finite field of characteristic different from 2; is 30 over an algebraically
closed field of characteristic different from 2; it is 
36 over a finite field of characteristic~2; and it is 34 over an algebraically 
closed field of characteristic 2.

\section{Vector spaces with forms}\label{formsect}

As cocycles are alternating bilinear forms, and vector spaces with quadratic forms will play an important role in determining isomorphisms within certain descendants of $L_{4,1}$ and $L_{4,2}$, we summarize in this section some basic facts concerning quadratic and bilinear forms. Suppose that $V$ is a vector space over a field $\F$ and $f$ is a function from $V\times V$ to $\F$. 
The function $f$ is  said to be a {\em bilinear form} if $f$ is linear in both of its variables. Further, $f$ is said to be {\em symmetric} if $f(u,v)=f(v,u)$, $f$ is said to be alternating if $f(v,v)=0$, while $f$ is said to be {\em skew-symmetric} if $f(u,v)=-f(v,u)$ for all $v,\ u\in V$. An alternating form is always skew-symmetric, while the converse of this statement is only valid if $\char \F\neq 2$. In characteristic~2, there are skew-symmetric forms that are not alternating. For a map $Q:V\rightarrow\F$ define $f_Q:V\times V\rightarrow\F$ as 
\[f_Q(u,v)=Q(u+v)-Q(u)-Q(v).\]
Then $Q$ is said to be a {\em quadratic form} if $Q(\alpha v)=\alpha^2Q(v)$ for all $\alpha\in\F$ and $v\in V$ and $f_Q$ is a bilinear form. In this case the bilinear form $f$ is called the {\em associated bilinear form} of $Q$. If $f$ is a symmetric or skew-symmetric bilinear form on a vector space $V$ and $U\subseteq V$ then let $U^\perp$ denote the {\em orthogonal complement} of $U$ in $V$:
\[U^\perp=\{v\in V\ |\ f(u,v)=0\mbox{ for all }u\in U\}.\]
The {\em radical} of the form $f$ is defined as $V^\perp$ and $f$ is said to be {\em non-singular} if $V^\perp=0$; otherwise $f$ is said to be {\em singular}. If $U$ is a subspace of $V$ then an orthogonal form $Q$ or a bilinear form $f$ can be restricted to $U$ and the restriction is a form with the same symmetrical properties as $f$. Such a space $U$ is called {\em singular} or {\em non-singular}, if the restriction of the form to $U$ is singular or non-singular, respectively.\par 
If $V$ is a vector space with basis $\{b_1,\ldots,b_n\}$ then, for $i,\ j\in\{1,\ldots,n\}$ with $i\neq j$, let $\Delta_{i,j}$ denote the alternating bilinear form defined as $\Delta_{i,j}(b_i,b_j)=-\Delta_{i,j}(b_j,b_i)=1$ and $\Delta_{i,j}(b_k,b_l)=0$
otherwise. Then the set of forms $\Delta_{i,j}$ with $i<j$ is a basis for the linear space of alternating bilinear forms on $V$. 
Suppose that $V$ is a vector space with a bilinear form $f$. If $g\in\GL(V)$ then the form $gf$ defined by $(gf)(u,v)=f(gu,gv)$ is also a bilinear form on $V$. Further, $gf$ is alternating, skew-symmetric, or symmetric if and only if $f$ is alternating, skew-symmetric, or symmetric, respectively. This defines a $\GL(V)$-action on the set of bilinear forms on $V$. The following lemma is well-known, see for example~\cite[Theorem~8.10.1]{lang}.

\begin{lem}\label{altlemma}
Let $V$ be a vector space with a fixed basis $\{b_1,\ldots,b_n\}$ and set $n_1=n$ if $n$ is even, $n_1=n-1$ if $n$ is odd.
\begin{enumerate}
\item[(i)] If $\Delta$ is a non-singular alternating bilinear form on $V$ then $n$ is even.
\item[(ii)] The group $\GL(V)$ has $\lfloor n/2\rfloor$ orbits on the set of alternating  bilinear forms on $V$ with orbit representatives
\[\Delta_{1,2},\ \Delta_{1,2}+\Delta_{3,4},\ldots,\Delta_{1,2}+\Delta_{3,4}+\cdots +\Delta_{n_1-1,n_1}.\]
\end{enumerate}
\end{lem}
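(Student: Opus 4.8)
The plan is to prove both parts by induction on $n$, simultaneously establishing a normal-form (symplectic basis) result that yields the orbit representatives and the evenness of the radical's complement. The key algebraic input is that $\GL(V)$ acts transitively on pairs $(u,v)$ with $f(u,v)\neq 0$, which lets me build up an orthogonal decomposition into hyperbolic planes.

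First I would handle the radical. Let $R=V^\perp$ be the radical of the alternating form $\Delta$, and choose a complement $W$ so that $V=R\oplus W$; the restriction of $\Delta$ to $W$ is then non-singular. I would argue that the rank of $\Delta$ (equivalently $\dim W$) is a $\GL(V)$-invariant, since $g\Delta$ has the same radical dimension as $\Delta$ for any $g\in\GL(V)$. This reduces the orbit problem to classifying non-singular alternating forms on each possible $W$, and it makes clear why there are $\lfloor n/2\rfloor$ orbits once I show $\dim W$ is always even and ranges over $0,2,4,\dots,n_1$.

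Next I would prove part (i), that a non-singular alternating form forces even dimension, as the engine of the induction. Assuming $W\neq 0$, pick $u\neq 0$; non-singularity gives some $v$ with $\Delta(u,v)\neq 0$, and after scaling $v$ I may assume $\Delta(u,v)=1$. Since $\Delta$ is alternating, $u,v$ are linearly independent and span a hyperbolic plane $H=\langle u,v\rangle$ on which $\Delta$ restricts to $\Delta_{1,2}$ in suitable coordinates. The standard orthogonality argument shows $V=H\perp H^\perp$ with $\Delta$ still non-singular on $H^\perp$; by induction $\dim H^\perp$ is even, so $\dim W=\dim H^\perp+2$ is even, and iterating produces a symplectic basis $b_1,\dots,b_{2m}$ in which $\Delta=\Delta_{1,2}+\Delta_{3,4}+\cdots+\Delta_{2m-1,2m}$. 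The linear map sending any two symplectic bases to each other lies in $\GL(V)$, so all non-singular forms of a given even rank are in one orbit.

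Finally, for part (ii) I would combine these pieces: extend the symplectic basis of $W$ by a basis of $R$ to a basis of all of $V$, observe that the resulting normal form is exactly $\Delta_{1,2}+\cdots+\Delta_{2m-1,2m}$ for some $m$ with $2m\leq n_1$, and note that forms of distinct rank lie in distinct orbits by the rank invariant. This gives precisely the $\lfloor n/2\rfloor$ representatives listed. I anticipate the main subtlety, rather than outright obstacle, to be bookkeeping the change of basis so that the representatives are literally the stated $\Delta_{i,j}$ with respect to the \emph{fixed} basis $\{b_1,\dots,b_n\}$: the inductive construction naturally produces a symplectic basis adapted to $\Delta$, and one must transport it back, via an element of $\GL(V)$, to the ambient fixed basis to conclude that each orbit meets the list. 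Since the statement is classical (and the paper cites \cite[Theorem~8.10.1]{lang}), I would keep this step brief, merely indicating that the symplectic-basis transport is an element of $\GL(V)$ and hence legitimate.
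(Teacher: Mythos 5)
The paper offers no proof of this lemma at all: it is stated as well known, with a citation to \cite[Theorem~8.10.1]{lang}. Your hyperbolic-plane induction is precisely the classical argument behind that citation --- split off the radical $R=V^\perp$, peel off a plane $H=\langle u,v\rangle$ with $\Delta(u,v)=1$ (linear independence of $u,v$ coming from $\Delta(u,u)=0$, which is why ``alternating'' rather than ``skew-symmetric'' is the right hypothesis in characteristic~$2$), use $V=H\oplus H^\perp$ with $\Delta$ still non-singular on $H^\perp$, conclude evenness of the rank and the symplectic normal form, and separate orbits by the rank, which is a $\GL(V)$-invariant. Your closing concern about the fixed basis is also resolved correctly: defining $g\in\GL(V)$ by $gb_j=c_j$ for the adapted symplectic basis $\{c_j\}$ gives $(g\Delta)(b_i,b_j)=\Delta(c_i,c_j)$, so each orbit literally meets the displayed list. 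So your proof is correct and is essentially the standard proof the paper defers to.

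One bookkeeping point, inherited from the statement itself rather than a defect of your argument: as you yourself write, the rank ranges over $0,2,4,\ldots,n_1$, which yields $\lfloor n/2\rfloor+1$ orbits on the set of \emph{all} alternating bilinear forms --- the zero form is an orbit on its own and is absent from the displayed list of representatives. The lemma as stated (and as used in the paper, where the relevant cocycles are nonzero) implicitly counts orbits of nonzero forms; your sentence asserting $\lfloor n/2\rfloor$ orbits while letting the rank start at $0$ carries the same off-by-one, and in a written-up version you should either exclude rank~$0$ or adjust the count to $\lfloor n/2\rfloor+1$.
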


Let $f$ be a form of one of the types above on a vector space $V$ over a field $\F$. If $\{b_1,\ldots,b_n\}$ is a fixed basis of $V$ then the Gram 
matrix $\mathbb G$ is defined with respect to this basis as the matrix whose $(i,j)$ entry is $f(b_i,b_j)$. The Gram determinant will be crucial for separating isomorphism types within  parametric families of Lie algebras in characteristic different 
from~2. However, in characteristic~2, another invariant, namely the Arf invariant, will be needed. Let $Q$ be a quadratic form on a vector space $V$ over a field of characteristic~2 with associated bilinear form $f$. Note, in this case, that $f$ is alternating.
Assume that $f$ is non-singular, which implies by Lemma~\ref{altlemma}(i), that $\dim V$ is even. Let $e_1,\ldots,e_k,f_1,\ldots,f_k$ be a symplectic basis of $V$; that is $f(e_i,e_j)=f(f_i,f_j)=0$ and $f(e_i,f_j)=\delta_{i,j}$ where $\delta_{i,j}$ is
the Kronecker-delta. Define the {\em Arf invariant} $\delta_Q$ of $Q$ with respect to the given basis as \[q=\sum_{i=1}^kQ(e_i)Q(f_i).\]
Of course, the Arf invariant of $Q$ depends on the chosen symplectic basis of $V$. However, the following is valid.

\begin{lem}\label{gramlemma}
Let $V$ be a vector space over a field and let $Q$ be a quadratic form with non-singular associated bilinear form.
\begin{itemize}
\item[(i)] Let $\mathbb G_1$ and $\mathbb G_2$ denote the Gram matrices of $V$ with respect to two bases of $V$. Then $\det \mathbb G_1/\det\mathbb G_2$ is an element of the multiplicative subgroup $\{x^2\ |\ x\in\F^*\}$ of $\F^*$.
\item[(ii)]
Assume that $\char \F=2$ and suppose that $q_1$ and $q_2$ are the values of the Arf invariant of $Q$ with respect to two symplectic bases. Then $q_1+q_2$ is an element of the additive subgroup $\{x^2+x\ |\ x\in\F\}$ of $\F$. 
\end{itemize}
\end{lem}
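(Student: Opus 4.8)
The two parts call for rather different arguments: part~(i) is a one-line change-of-basis computation, whereas part~(ii) requires controlling how the Arf value varies as the symplectic basis ranges over \emph{all} symplectic bases. For part~(i), I would let $P\in\GL(V)$ be the transition matrix between the two bases, so that the associated bilinear form has Gram matrices related by $\mathbb G_1=P^{t}\mathbb G_2P$. Taking determinants gives $\det\mathbb G_1=(\det P)^2\det\mathbb G_2$, and since $P$ is invertible we have $\det P\in\F^*$; thus $\det\mathbb G_1/\det\mathbb G_2=(\det P)^2\in\{x^2\mid x\in\F^*\}$. Non-singularity of the form guarantees $\det\mathbb G_2\neq0$, so the quotient is defined, and no assumption on the characteristic is needed here.

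For part~(ii), the organising idea is that the symplectic group $\mathrm{Sp}(V)$ of isometries of the non-singular alternating form $f$ acts transitively on symplectic bases, so two symplectic bases of $V$ are always related by some $g\in\mathrm{Sp}(V)$. Since $\mathrm{Sp}(V)$ is generated by the symplectic transvections $T_{v,\lambda}\colon x\mapsto x+\lambda f(x,v)v$ (a classical fact), and since $\psi(\F)=\{x^2+x\mid x\in\F\}$ is an additive subgroup of $\F$, a telescoping argument along a factorisation $g=T_1\cdots T_m$ reduces everything to the single claim that $q_{TB}+q_B\in\psi(\F)$ for every symplectic basis $B$ and every transvection $T=T_{v,\lambda}$; here $q_B$ denotes the Arf value computed from $B$. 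To establish this claim I would first record, from $Q(x+w)=Q(x)+Q(w)+f(x,w)$ and $f(v,v)=0$, the identity $Q(Tx)=Q(x)+f(x,v)^2\bigl(\lambda^2Q(v)+\lambda\bigr)$. Writing $v=\sum_i(p_ie_i+r_if_i)$ in the symplectic basis $B=(e_i,f_i)$, using $f(e_i,v)=r_i$ and $f(f_i,v)=p_i$, and setting $c=\lambda^2Q(v)+\lambda$, this gives in characteristic~2 the expression
\[q_{TB}+q_B=c\sum_i\bigl(Q(e_i)p_i^2+Q(f_i)r_i^2\bigr)+c^2\sum_i(p_ir_i)^2.\]

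This expression collapses because of two characteristic-2 phenomena. First, $\bigl(\sum_ip_ir_i\bigr)^2=\sum_i(p_ir_i)^2$, so with $P=\sum_ip_ir_i$ and $S=\sum_i\bigl(Q(e_i)p_i^2+Q(f_i)r_i^2\bigr)$ the right-hand side equals $cS+(cP)^2$, and moreover $Q(v)=S+P$. Second, $x^2\equiv x\pmod{\psi(\F)}$ for every $x$, since $x^2-x=x^2+x\in\psi(\F)$. Combining these,
\[q_{TB}+q_B=cS+(cP)^2\equiv cS+cP=c(S+P)=\bigl(\lambda(S+P)\bigr)^2+\lambda(S+P)\in\psi(\F),\]
as required. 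The real obstacle is the first, conceptual, step: converting a comparison of two arbitrary symplectic bases into finitely many elementary moves of a single uniform shape. Transvection-generation of $\mathrm{Sp}(V)$ together with the subgroup property of $\psi(\F)$ achieves this, after which the two displayed identities reduce a potentially unwieldy polynomial to the manifestly admissible form $X^2+X$ with $X=\lambda(S+P)$. As a consistency check, the case in which $v$ lies in a single hyperbolic plane $\langle e_i,f_i\rangle$ is exactly the two-dimensional statement that a change of symplectic basis of a hyperbolic plane alters $Q(e)Q(f)$ only by an element of $\psi(\F)$, and the computation above subsumes it.
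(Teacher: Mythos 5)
Your proposal is correct, but it is genuinely different from what the paper does: the paper offers no argument at all for this lemma, disposing of both parts by citation (part~(i) to equation~(8.1.8) and part~(ii) to Theorem~8.11.12 of Cohn's \emph{Basic Algebra}). Your part~(i) is the standard one-line computation $\mathbb G_1=P^{t}\mathbb G_2 P$ that the cited equation encodes, and your part~(ii) is a complete, self-contained replacement for the cited theorem. The structure of your argument is sound at every joint I checked: transitivity of $\mathrm{Sp}(V)$ on symplectic bases is immediate (the map sending one symplectic basis to another preserves $f$ because the Gram matrices agree); generation of $\mathrm{Sp}(V)$ by transvections is a classical fact valid over \emph{every} field, including $\F_2$ and all fields of characteristic~2 (Artin, Dieudonn\'e), so no exceptional cases bite; the telescoping is legitimate because each intermediate basis $T_{j+1}\cdots T_m B$ is again symplectic and $\psi(\F)=\{x^2+x\mid x\in\F\}$ is closed under addition in characteristic~2 (the paper itself notes $\psi$ is $\F_2$-linear); and the final computation is right, since $q_{TB}+q_B=cS+(cP)^2$ with $c=\lambda^2Q(v)+\lambda$, $Q(v)=S+P$ (the cross terms $f(p_ie_i,r_if_i)=p_ir_i$ produce exactly $P$), and $cS+cP=c\,Q(v)=\bigl(\lambda Q(v)\bigr)^2+\lambda Q(v)$, which together with $(cP)^2+cP\in\psi(\F)$ puts the difference in $\psi(\F)$. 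What your route buys is independence from the reference and an explicit normal form $X^2+X$ with $X=\lambda(S+P)$, at the cost of importing one nontrivial classical input (transvection generation, which also implicitly requires the finite-dimensionality that the lemma's hypotheses of Gram matrices and symplectic bases already presuppose); what the paper's route buys is brevity, since the lemma is standard. For a self-contained version of the paper one could splice your argument in essentially verbatim.
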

\begin{proof}
Statement~(i) is well-known, see for instance equation~(8.1.8) in~\cite{cohn}. Statement~(ii) is proved in~\cite[Theorem 8.11.12]{cohn}.
\end{proof}

Suppose that $Q$ is a quadratic form on a vector space $V$ with associated bilinear form $f_Q$. A vector $v\in V$ is said to be {\em singular} if $Q(v)=0$, and  it is {\em isotropic} if $f(v,v)=0$. A subspace $U$ of $V$ is {\em totally singular} if $Q(u)=0$ for all $u\in U$, while $U$ is said to be totally isotropic if $f(u,u)=0$ for all $u\in U$. If $\char \F\neq 2$, then the notions of singular and isotropic, and those of totally singular  and totally isotropic can be freely interchanged. This, however, is no longer true if $\char \F=2$. Let $V$ be a vector space with a quadratic form $Q$ and let $G$ be a group acting on $V$. Then we say that $G$ {\em preserves $Q$ modulo scalars} if for each $g\in G$ there is some $\alpha_g\in\F$ such that
\[Q(gv)=\alpha_gQ(v).\]

\begin{lem}\label{arflemma}
Suppose that $V$ is a vector space over a field $\F$ with a quadratic form $Q$ whose associated bilinear form is $f$, and let $G$ be a subgroup of  $\GL(V)$ preserving $Q$ modulo scalars. Suppose that $S_1$ and $S_2$ are $2$-dimensional subspaces of $V$ in the same $G$-orbit, and let $\{b_1,b_2\}$ and $\{c_1,c_2\}$ be bases of $S_1$ and $S_2$, respectively.
\begin{enumerate}
\item[(i)] 
Suppose that $S_1$ and $S_2$ are non-singular subspaces, and let $\mathbb G_1$ and $\mathbb G_2$ be the Gram matrices of $S_1$ and $S_2$ with respect to the given bases. Then $(\det \mathbb G_1)/(\det \mathbb G_2)\in\{\alpha^2\ |\ \alpha\in\F^*\}$. 
\item[(ii)] Suppose that $\char \F=2$, that $f$ is non-singular, 
that the given bases of $S_i$ are symplectic, 
and that $q_1$ and $q_2$ are the Arf invariants of $S_1$ and $S_2$ with respect to the given bases. Then $q_1+q_2\in\{\alpha+\alpha^2\ |\ \alpha\in\F\}$. 
\item[(iii)]
Suppose that $\char \F=2$ and that $f$ is identically zero on $S_i$, for $i=1,2$, and set $q_1=Q(b_1)Q(b_2)$ and $q_2=Q(c_1)Q(c_2)$. Then there exist $\alpha\in\F^*$ and $\beta\in\F$ such that $q_2=\alpha^2q_1+\beta^2$. 
\end{enumerate}
\end{lem}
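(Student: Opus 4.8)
The plan is to drive all three parts with a single mechanism coming from the hypothesis that $S_1$ and $S_2$ lie in one $G$-orbit. First I would fix $g\in G$ with $gS_1=S_2$, and let $\alpha_g\in\F$ be the scalar with $Q(gv)=\alpha_gQ(v)$ for all $v$. Polarizing this identity gives $f(gu,gv)=Q(g(u+v))-Q(gu)-Q(gv)=\alpha_g\bigl(Q(u+v)-Q(u)-Q(v)\bigr)=\alpha_gf(u,v)$, so $g$ scales the associated bilinear form by the same factor. I would record next that $\alpha_g\neq0$ in every relevant case: in parts~(i) and~(ii) a vanishing $\alpha_g$ would make $f$ identically zero on $S_2=gS_1$, contradicting non-singularity; in part~(iii), if $Q\equiv0$ on $V$ then $q_1=q_2=0$ and the claim holds with $\alpha=1$, $\beta=0$, while otherwise applying the scaling also to $g^{-1}\in G$ forces $\alpha_{g^{-1}}\alpha_g=1$, so $\alpha_g\in\F^*$. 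The strategy is then to transport the basis $\{b_1,b_2\}$ of $S_1$ to $\{gb_1,gb_2\}$ of $S_2$ and reduce each statement to a change-of-basis comparison inside $S_2$.

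For part~(i), $\{gb_1,gb_2\}$ is a basis of $S_2$ whose Gram matrix is $\alpha_g\mathbb G_1$, of determinant $\alpha_g^2\det\mathbb G_1$. Comparing with the Gram matrix $\mathbb G_2$ taken with respect to $\{c_1,c_2\}$, Lemma~\ref{gramlemma}(i) applied to $S_2$ shows the two determinants differ by a factor in $\{x^2\mid x\in\F^*\}$; hence $\det\mathbb G_2=\alpha_g^2(\det\mathbb G_1)\cdot(\text{square})$, and since the nonzero squares form a subgroup of $\F^*$, rearranging gives $(\det\mathbb G_1)/(\det\mathbb G_2)\in\{\alpha^2\mid\alpha\in\F^*\}$. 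For part~(ii), I would repair the symplectic condition: since $f(gb_1,gb_2)=\alpha_g$, the pair $\{gb_1,\alpha_g^{-1}gb_2\}$ is a symplectic basis of $S_2$, and its Arf invariant is $Q(gb_1)\,Q(\alpha_g^{-1}gb_2)=(\alpha_gQ(b_1))(\alpha_g^{-2}\cdot\alpha_gQ(b_2))=q_1$, the two powers of $\alpha_g$ cancelling because $Q$ is quadratic. As $\{gb_1,\alpha_g^{-1}gb_2\}$ and $\{c_1,c_2\}$ are two symplectic bases of $S_2$, Lemma~\ref{gramlemma}(ii) yields $q_1+q_2\in\{x^2+x\mid x\in\F\}$.

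Part~(iii) is where the common reduction cannot be finished by quoting Lemma~\ref{gramlemma}, since neither a Gram determinant nor a symplectic basis is available, and I expect this to be the main obstacle: it must be settled by an explicit change-of-basis computation. The key simplification is that $f$ vanishes on $S_2$, so the restriction of $Q$ to $S_2$ is additive, $Q(u+v)=Q(u)+Q(v)$, while still $Q(\lambda u)=\lambda^2Q(u)$. Writing $c_1=a\,gb_1+b\,gb_2$, $c_2=c\,gb_1+d\,gb_2$ with $\det\begin{pmatrix}a&b\\c&d\end{pmatrix}\neq0$, and setting $p=Q(gb_1)$, $r=Q(gb_2)$, additivity gives $Q(c_1)=a^2p+b^2r$ and $Q(c_2)=c^2p+d^2r$, so $q_2=Q(c_1)Q(c_2)=a^2c^2p^2+\bigl(a^2d^2+b^2c^2\bigr)pr+b^2d^2r^2$. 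Using the Frobenius identity $x^2+y^2=(x+y)^2$ valid in characteristic~$2$, the middle terms collapse to $(ad-bc)^2pr$ and the outer terms to $(acp+bdr)^2$; combined with $pr=Q(gb_1)Q(gb_2)=\alpha_g^2q_1$ this gives $q_2=\bigl((ad-bc)\alpha_g\bigr)^2q_1+(acp+bdr)^2$. Setting $\alpha=(ad-bc)\alpha_g\in\F^*$ and $\beta=acp+bdr$ then completes the proof.
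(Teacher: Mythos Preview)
Your proof is correct and follows essentially the same approach as the paper's: transport the basis of $S_1$ by $g$ to $S_2$, then invoke Lemma~\ref{gramlemma} for parts~(i) and~(ii), and carry out the explicit change-of-basis computation for part~(iii). The only differences are cosmetic (you scale $gb_2$ rather than $gb_1$ in~(ii), and in~(iii) you expand $c_i$ in terms of the $gb_j$ using additivity on $S_2$ whereas the paper pulls back through $g$ and uses additivity on $S_1$); you are also more careful than the paper in justifying $\alpha_g\neq0$.
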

\begin{proof}
Suppose that $g\in G$ such that $gS_1=S_2$. Since $G$ preserves the form $Q$ modulo scalars, there is some $\alpha\in\F$ such that $Q(gv)=\alpha Q(v)$, and $f(gu,gv)=\alpha f(u,v)$ for all $u,\ v\in V$. Let us prove statement~(i) first. The elements $gb_1$ and $gb_2$ form a basis for $S_2$ and the Gram matrix of $S_2$ with respect to this basis is $\alpha \mathbb G_1$ with determinant $\alpha^2\det \mathbb G_1$.  By a remark above, there is some $\beta\in\F^*$ such that $\alpha^2\det \mathbb G_1=\beta^2\det \mathbb G_2$, and hence $(\det \mathbb G_1)/(\det \mathbb G_2)=(\beta/\alpha)^2$ as claimed.\par
Let us now prove the second assertion. Since $b_1,\ b_2$ is a symplectic basis of $S_1$, we obtain that $f(gb_1,gb_1)=f(gb_2,gb_2)=0$ and $f(gb_1,gb_2)=f(gb_2,gb_1)=\alpha$. Hence the basis $(1/\alpha)gb_1,\ gb_2$ is a symplectic
basis of $S_2$ and the Arf invariant with respect to this basis is
\[Q\left(\frac 1\alpha gb_1\right)Q(gb_2)=\frac{1}{\alpha^2}\alpha^2Q(b_1)Q(b_2)=q_1.\]
Thus Lemma~\ref{arflemma} gives that $q_1+q_2\in \{\alpha+\alpha^2\ |\ \alpha\in\F\}$.\par
(iii) First we note that if $f$ is identically zero, then $Q(\gamma b_1+\delta b_2)=\gamma^2Q(b_1)+\delta^2Q(b_2)$. Since $gS_1=S_2$, there are $\alpha_1,\ \alpha_2,\ \beta_1,\ \beta_2\in\F$ such that $c_1=g(\alpha_1b_1+\alpha_2 b_2)$ and $c_2=g(\beta_1b_1+\beta_2b_2)$. As $c_1$ and $c_2$ are linearly independent, we obtain that $\alpha_1\beta_2+\alpha_2\beta_1\neq 0$. 
Then
\begin{eqnarray*}
q_2&=&Q(c_1)Q(c_2)\\
&=&Q(g(\alpha_1b_1+\alpha_2 b_2))Q(g(\beta_1b_1+\beta_2b_2))\\
&=&\alpha^2(\alpha_1^2Q(b_1)+\alpha_2^2Q(b_2))(\beta_1^2Q(b_1)+\beta_2^2Q(b_2))\\
&=&\alpha^2(\alpha_1^2\beta_1^2Q(b_1)^2+(\alpha_1^2\beta_2^2+\alpha_2^2\beta_1^2)Q(b_1)Q(b_2)+\alpha_2^2\beta_2^2Q(b_2)^2)\\
&=&(\alpha\alpha_1\beta_1Q(b_1)+\alpha\alpha_2\beta_2Q(b_2))^2+(\alpha\alpha_1\beta_2+\alpha\alpha_2\beta_1)^2q_2.
\end{eqnarray*}
Since $\alpha\neq 0$ and $\alpha_1\beta_2+\alpha_2\beta_1\neq 0$, the assertion follows.
\end{proof}

We close this section with an interesting byproduct of our work. In the course of determining the isomorphism classes of the descendants of the 4-dimensional abelian Lie algebra, we determined, for an arbitrary 4-dimensional vector space $V$,  the $\GL(V)$-orbits on the 2-dimensional subspaces of $V\wedge V$. As this result may have applications elsewhere, we state it separately.

\begin{thm}\label{kleinth}
Let $V$ be a vector space of dimension~$4$ over a field $\F$ with basis $\{v_1,v_2,v_3,v_4\}$. If $\char \F\neq 2$ then set \[\mathcal P=\{\langle v_1\wedge v_2,v_1\wedge v_3\rangle\}\cup\{\langle v_1\wedge v_2+v_3\wedge v_4,v_1\wedge v_3+
\varepsilon v_2\wedge v_4\rangle\, |\,\varepsilon\in\F/(\relstar)\}.\]
If $\char \F=2$, then let 
\begin{eqnarray*}
\mathcal P&=&\{\langle v_1\wedge v_2,v_1\wedge v_3\rangle,\langle v_1\wedge v_2+v_3\wedge v_4,
\rangle,\\ 
&&\quad\langle v_1\wedge v_2+v_3\wedge v_4,v_1\wedge v_3+\omega v_2\wedge v_4+v_3\wedge v_4\rangle\mid \omega\in\F/(\relpsi)\}\\
&\cup&\{\langle v_1\wedge v_2+v_3\wedge v_4,v_1\wedge v_3+
\varepsilon v_2\wedge v_4\rangle\, |\,\varepsilon\in\F/(\relstarplus)\}.
\end{eqnarray*}
Then $\mathcal P$ is a complete and irredundant set of representatives of the $\GL(V)$-orbits on the two-dimensional subspaces of $V\wedge V$. 
\end{thm}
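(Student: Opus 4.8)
The plan is to make the Klein correspondence explicit and thereby convert the orbit problem into a question about a single quadratic form. Write $W=V\wedge V$ and fix the generator $v_1\wedge v_2\wedge v_3\wedge v_4$ of $\Lambda^4V$. I would define a quadratic form $Q\colon W\to\F$ by $\xi\wedge\xi=2Q(\xi)\,v_1\wedge v_2\wedge v_3\wedge v_4$ in characteristic different from~$2$, and directly by the coordinate formula $Q(\sum_{i<j}p_{ij}\,v_i\wedge v_j)=p_{12}p_{34}+p_{13}p_{24}+p_{14}p_{23}$ in characteristic~$2$; in coordinates these agree up to signs, and in each case the associated bilinear form $f=f_Q$ is non-singular on the six-dimensional space $W$. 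The first step is to check that $\GL(V)$ acts on $W$ via $g(u\wedge w)=gu\wedge gw$ and preserves $Q$ modulo scalars, the scalar attached to $g$ being $\det g$ (because $g$ acts on $\Lambda^4V$ as multiplication by $\det g$). Hence Lemma~\ref{arflemma} applies with $G$ the image of $\GL(V)$ in $\GL(W)$, and the Gram determinant modulo squares, the Arf invariant modulo $\{x^2+x\}$, and the product $Q(b_1)Q(b_2)$ modulo $\relstarplus$ are all $\GL(V)$-invariants of a two-dimensional subspace $S=\langle b_1,b_2\rangle$. This settles the irredundancy half of the statement at once.

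For completeness I would stratify the two-dimensional subspaces $S$ by the restrictions $Q|_S$ and $f|_S$, using the basic fact that a bivector is singular for $Q$ exactly when it is decomposable, so that $S$ is totally singular precisely when every bivector in it is decomposable. Since $f|_S$ has rank $0$ or $2$ in the alternating (characteristic~$2$) case and rank $0,1,2$ in the symmetric case, the strata are: (a)~$Q|_S\equiv0$; (b)~$f|_S$ non-singular; and, in characteristic~$2$, (c)~$f|_S\equiv0$ but $Q|_S\not\equiv0$. In characteristic different from~$2$, stratum~(b) subdivides further by whether the Gram matrix has rank~$1$ or~$2$, and these correspond respectively to $\varepsilon=0$ and $\varepsilon\neq0$ inside the family $\langle v_1\wedge v_2+v_3\wedge v_4,\ v_1\wedge v_3+\varepsilon v_2\wedge v_4\rangle$.

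The engine of the reduction is Lemma~\ref{altlemma}: viewing a bivector as an alternating form, $\GL(V)$ carries any nonzero decomposable bivector to $v_1\wedge v_2$ and any non-decomposable one to $v_1\wedge v_2+v_3\wedge v_4$. So I would first move a suitably chosen generator of $S$ to one of these two normal forms, and then use the stabilizer of that generator, which I would describe by explicit matrices, to clean up the second generator modulo the first. Stratum~(a) yields the single representative $\langle v_1\wedge v_2,\ v_1\wedge v_3\rangle$; stratum~(b) produces $v_1\wedge v_2+v_3\wedge v_4$ as first generator and leaves the second generator controlled, up to the similitude scalar $\det g$, by the Gram determinant (characteristic $\neq2$, giving $\varepsilon\in\F/(\relstar)$) or the Arf invariant (characteristic~$2$, giving the two classes $\{0,\omega\}$); stratum~(c) leaves the second generator controlled by the product $Q(b_1)Q(b_2)$ modulo $\relstarplus$, giving $\varepsilon\in\F/(\relstarplus)$. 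A direct evaluation of $Q$, $f$, the Gram determinant and the Arf invariant on each listed representative then matches it to its stratum and parameter value.

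The hard part will be the completeness direction inside each stratum, namely proving that two subspaces with equal invariants genuinely share one $\GL(V)$-orbit; the point is that the similitude scalar $\det g$ absorbs exactly the square factors, which is why the parameter lives in $\F/(\relstar)$, respectively $\F/(\relstarplus)$, while the residual freedom in the stabilizer kills the remaining coordinates. The most delicate case is the characteristic-$2$ stratum~(c): the value $\varepsilon=0$ of the product invariant is shared by the totally singular subspace of stratum~(a) and by $\langle v_1\wedge v_2+v_3\wedge v_4,\ v_1\wedge v_3\rangle$, which has a single singular direction. These must be separated by the coarser invariant ``$Q|_S\equiv0$ or not'', and I would take care to verify that the $\varepsilon=0$ member of stratum~(c) is not accidentally merged with stratum~(a).
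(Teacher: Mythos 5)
Your proposal is correct, and its two pillars coincide with the paper's: the quadratic form $Q$ with $Q(g\xi)=(\det g)Q(\xi)$, together with Lemma~\ref{arflemma} (Gram determinant mod squares, Arf invariant mod $\{x^2+x\}$, and the product $Q(b_1)Q(b_2)$ mod $\relstarplus$), is exactly how the paper separates the orbits in the section on $L_{4,1}$, and your planned explicit reduction to normal forms is what the paper carries out there with a chain of concrete matrices. Where you genuinely diverge is in organization and in one substantive point. Organizationally, the paper does not stratify by $(Q|_S,f|_S)$ before reducing: it normalizes coordinates ad hoc (first vector to $(1,0,0,0,0,f_1)$, then case analysis on $f_1$, $e_2$, $f_2$) and only afterwards invokes the geometry to prove irredundancy; your stratification-first scheme, with the first generator normalized via Lemma~\ref{altlemma} (decomposable $\mapsto v_1\wedge v_2$, non-decomposable $\mapsto v_1\wedge v_2+v_3\wedge v_4$) and the cleanup done inside the stabilizer, is a cleaner frame for the same computation, at the price of having to describe the symplectic stabilizer explicitly. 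Substantively, the totally singular stratum is handled quite differently: the paper observes that these are precisely the non-allowable subspaces, for which $L_S\cong L_{5,8}\oplus\F$, and deduces transitivity from Theorem~\ref{coctheorem} together with the classification of $5$-dimensional nilpotent Lie algebras; your direct linear-algebra treatment of stratum~(a) avoids that detour entirely and makes the theorem self-contained within multilinear algebra, whereas the paper gets this orbit essentially for free from earlier work. Your closing worry is well placed and is implicitly present in the paper too: the $\varepsilon=0$ member of stratum~(c) and the totally singular plane both have product invariant in the class of $0$, and only the coarser invariant ``$Q|_S\equiv 0$ or not'' (equivalently, allowable or not) separates them. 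One small slip to repair in your write-up: in characteristic different from~$2$ you define stratum~(b) as ``$f|_S$ non-singular'' but then subdivide it by Gram rank $1$ or $2$; it should be defined as $f|_S\not\equiv 0$, so that the rank-$1$ case (which is the $\varepsilon=0$ orbit, as your own parenthetical indicates) is not left out of the stratification.
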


The proof of Theorem~\ref{kleinth} will be given in Section~\ref{orbsect}.

\section{The calculation of 6-dimensional descendants}\label{orbsect}

In this section we prove the results stated in Section~\ref{ressect}. In order to make the calculations more compact, we introduce uniform notation. In each of the sections, $L$ will denote a fixed nilpotent Lie algebra with dimension $d$, where $d<6$, for which the descendants will be computed. The algebra $L$ will be given by a multiplication table as in Section~\ref{ressect}. Recall that the forms $\Delta_{i,j}$ with $i\leq j$ form a basis for the space of alternating bilinear forms on $L$, and the cohomology spaces $Z^2(L,\F)$, $B^2(L,\F)$, and $H^2(L,\F)$ will be determined in terms of this basis. These spaces will be described for each of the Lie algebras without proof, as it is an easy exercise to compute them in these cases. If $\Delta$ is an element of $Z^2(L,\F)$, then
$\overline\Delta$ denotes its image in $H^2(L,\F)$.\par
The automorphism group of  $L$ will be described as a group of $(d\times d)$-matrices with respect to the given basis of $L$. The automorphism groups will also
be presented without proof as it is in general easy to verify that the given matrices do indeed form the full automorphism group. We use column notation to describe the automorphisms; that is, the $i$-th column of the given matrix will contain the image of the $i$-th basis vector of $L$. 
By Theorem~\ref{coctheorem}, 
we will need to compute the $\Aut(L)$-orbits on the $(6-d)$-dimensional allowable 
subspaces of $H^2(L,\F)$. The set of these subspaces will be denoted by $\S$. The quotient $H^2(L,\F)$ will be given with a fixed basis $\Gamma_1,\ldots,\Gamma_k$. The element $\alpha_1\Gamma_1+\cdots+\alpha_k\Gamma_k$ of $H^2(L,F)$ will be written simply as $(\alpha_1,\ldots,\alpha_k)$. The action of $\Aut(L)$ on $H^2(L,\F)$ will be computed explicitly.\par
Now we can start determining the 6-dimensional descendants of the nilpotent Lie algebras with dimension at most $5$.

\subsection*{$\mathbf L_{5,1}$, $\mathbf L_{5,4}$, $\mathbf L_{4,3}$, $\mathbf L_{3,1}$, $\mathbf L_{3,2}$}

First we compute the descendants in the easy cases. As there are no non-singular alternating bilinear forms on an odd-dimensional space (Lemma~\ref{altlemma}(i)), the Lie algebra $L_{5,1}$ has no step-1 descendants. If $L=L_{5,4}$ then $Z^2(L,F)=\langle \Delta_{1,2},\Delta_{1,3},\Delta_{1,4},\Delta_{2,3},\Delta_{2,4},\Delta_{3,4}\rangle$, $B^2(L,F)=\langle\Delta_{1,2}+\Delta_{3,4}\rangle$, and  $H^2(L,\F)=\langle \overline{\Delta_{1,3}},\overline{\Delta_{1,4}},
\overline{\Delta_{2,3}},\overline{\Delta_{2,4}},\overline{\Delta_{3,4}}\rangle$. Note that each element of $H^2(L,\F)$ has $x_5$ in its radical. Hence $\S=\emptyset$, and so there are no step-$1$ descendants of $L$.\par 
If $L=L_{4,3}$ then $H^2(L,\F)$ is 2-dimensional, spanned by $\overline{\Delta_{1,4}}$, and $\overline{\Delta_{2,3}}$. Since $H^2(L,\F)$ is allowable, $\mathcal S=H^2(L,\F)$ which gives that $L$ has a single isomorphism type of step-2 descendants, as claimed in 
Section~\ref{ressect}. Similarly, if $L=L_{3,1}$, we obtain that $H^2(L,\F)$ is 3-dimensional spanned by $\overline{\Delta_{1,2}}$,  $\overline{\Delta_{1,3}}$, and $\overline{\Delta_{2,3}}$. Since $H^2(L,\F)$ is allowable, we have that $\mathcal S=H^2(L,\F)$, which shows that $L$ has only one isomorphism class of step-3 descendants. Finally, if $L=L_{3,2}$, then $H^2$ is 2-dimensional spanned by 
$\overline{\Delta_{1,3}}$ and $\overline{\Delta_{2,3}}$ which shows, in this case, that $\mathcal S=\emptyset$.

\subsection*{$\mathbf L_{5,2}$}

Set $L=L_{5,2}$. It is routine to check that $\Aut(L)$ is the group of invertible matrices of the form 
\begin{equation}\label{A52}
A=\begin{pmatrix} 
a_{11} & a_{12} & 0 & 0 & 0 \\
a_{21} & a_{22} & 0 & 0 & 0\\
a_{31} & a_{32} & u & a_{34} & a_{35} \\
a_{41} & a_{42} & 0 & a_{44} & a_{45}\\
a_{51} & a_{52} & 0 & a_{54} & a_{55}
\end{pmatrix}
\end{equation}
with $u = a_{11}a_{22}-a_{12}a_{21}$. As 
$Z^2(L,\F)=\langle \Delta_{1,2},\Delta_{1,3},\Delta_{1,4},\Delta_{1,5},\Delta_{2,3},\Delta_{2,4},\Delta_{2,5},\Delta_{4,5}\rangle$ and $B^2(L,\F)=\langle \Delta_{1,2}\rangle$, we obtain that $H^2(L,\F)=\langle \overline{\Delta_{1,3}},\overline{\Delta_{1,4}},\overline{\Delta_{1,5}},\overline{\Delta_{2,3}},\overline{\Delta_{2,4}},\overline{\Delta_{2,5}},\overline{\Delta_{4,5}}\rangle$ and 
\[\mathcal S=\left\{\langle (a,b,c,d,e,f,g)\rangle\ |\ g\neq 0\mbox{ and }(a,d)\neq (0,0)\right\}.\]
As the matrix $A$ is invertible, $u\neq 0$. For $\vartheta=(a,b,c,d,e,f,g)\in H^2(L,\F)$, we compute that $A\vartheta =(\bar a,\bar b,\bar c,\bar d,\bar e,\bar f,\bar g)$ where 
\begin{eqnarray*}
\bar a & =& u(a_{11}a+a_{21}d);\\
\bar b & =& a_{11}a_{34}a+a_{11}a_{44}b+a_{11}a_{54}c+a_{21}a_{34}d+a_{21}a_{44}e+a_{21}a_{54}f +(a_{41}a_{54}-a_{44}a_{51})g; \\
\bar c & =& a_{11}a_{35}a+a_{11}a_{45}b+a_{11}a_{55}c+a_{21}a_{35}d+a_{21}a_{45}e+a_{21}a_{55}f +(a_{41}a_{55}-a_{45}a_{51})g; \\
\bar d & =& u(a_{12}a+a_{22}d);\\
\bar e & =& a_{12}a_{34}a+a_{12}a_{44}b+a_{12}a_{54}c+a_{22}a_{34}d+a_{22}a_{44}e+a_{22}a_{54}f +(a_{42}a_{54}-a_{44}a_{52})g; \\
\bar f & =& a_{12}a_{35}a+a_{12}a_{45}b+a_{12}a_{55}c+a_{22}a_{35}d+a_{22}a_{45}e+a_{22}a_{55}f +(a_{42}a_{55}-a_{45}a_{52})g; \\
\bar g & =& (a_{44}a_{55}-a_{45}a_{54})g.
\end{eqnarray*}
Choose $S=\langle (a,b,c,d,e,f,g)\rangle\in\S$ and let $B$ be the first of the following matrices if $a\neq 0$, or the second if $a=0$:
\[\begin{pmatrix}
1&-dg&0&0&0\\
0&ag&0&0&0\\
0&0&ag&-b&-c\\
0&-(af-cd)&0&a&0\\
0&ae-bd&0&0&a
\end{pmatrix}, \quad
\begin{pmatrix}
0&-dg&0&0&0\\
1&0&0&0&0\\
0&0&dg&-e&-f\\
0&cd&0&d&0\\
0&-bd&0&0&d
\end{pmatrix}.\]
Then $B S=\langle (1,0,0,0,0,0,1)\rangle$ and hence the group of matrices of the form~\eqref{A52}  has only one orbit on $\S$. Thus $L$ has only one step-$1$ descendant, namely $L_{6,10}$.

\subsection*{$\mathbf L_{5,3}$}

Set $L=L_{5,3}$. The group $\Aut(L)$ consists of the invertible matrices of the form
\begin{equation}\label{A53}
A=\begin{pmatrix} a_{11} & 0 & 0 & 0 & 0 \\
a_{21} & a_{22} & 0 & 0 & 0\\
a_{31} & a_{32} & a_{11}a_{22} & 0 & 0 \\
a_{41} & a_{42} & a_{11}a_{32} & a_{11}^2a_{22} & a_{45}\\
a_{51} & a_{52} & 0 & 0 & a_{55}\end{pmatrix}.
\end{equation}
We have that $Z^2(L,\F)=\langle\Delta_{1,2},\Delta_{1,3},\Delta_{1,4},\Delta_{1,5},\Delta_{2,3},\Delta_{2,5}\rangle$, $B^2(L,\F)=\langle \Delta_{1,2},\Delta_{1,3}\rangle$, and so $H^2(L,\F)=\langle \overline{\Delta_{1,4}},\overline{\Delta_{1,5}},\overline{\Delta_{2,3}},\overline{\Delta_{2,5}}\rangle$. Further,
\[\S=\{\langle (a,b,c,d)\rangle\mid a,d\neq 0\}.\]
If $\vartheta=(a,b,c,d)\in H^2(L,\F)$, then $A\vartheta =(\bar a,\bar b,\bar c,\bar d)$ where 
\[\bar a  = a_{11}^3a_{22}a,\quad
\bar b = a_{11}a_{45}a+a_{11}a_{55}b+a_{21}a_{55}d,\quad
\bar c  = a_{11}a_{22}^2c,\quad
\bar d  = a_{22}a_{55}d.\]
Choose $S=\langle(a,b,c,d)\rangle\in\S$. Set $c_1=1$ if $c=0$ and set $c_1=c/(ad^2)$ otherwise. Let $B$ denote the matrix
\[\begin{pmatrix}
c_1&0&0&0&0\\
0&c_1&0&0&0\\
0 &0&c_1^2&0&0\\
0&0&0&c_1^3&0\\
0&0&0&0&c_1^3
\end{pmatrix}\begin{pmatrix}
d&0&0&0&0\\
0&1&0&0&0\\
0&0&d&0&0\\
0&0&0&d^2&-bd^2\\
0&0&0&0&ad^2
\end{pmatrix}.\]
Easy computation shows that $BS=\langle (1,0,0,1)\rangle$ if $c=0$ while $BS=\langle (1,0,1,1)\rangle$ otherwise. Hence the group of matrices of the form~\eqref{A53} has two orbits on $\mathcal S$, namely $\langle(1,0,0,1)\rangle$ and $\langle(1,0,1,1)\rangle$. The corresponding Lie algebras are $L_{6,12}$ and $L_{6,11}$.\par
The derived subalgebra $\langle x_3, x_4, x_6\rangle$ of both $L_{6,11}$ and $L_{6,12}$ is 3-dimensional. However, in $L_{6,11}$ the centralizer $\langle x_3,x_4,x_5,x_6\rangle$ of the derived subalgebra is $4$-dimensional, while in $L_{6,12}$ the centralizer $\langle x_2,x_3,x_4,x_5,x_6\rangle$ of $L_{6,12}'$ is 5-dimensional. Hence $L_{6,11}$ and $L_{6,12}$ are not isomorphic.

\subsection*{$\mathbf L_{5,5}$}
Let $L=L_{5,5}$. Invertible matrices of the form  
\begin{equation}\label{A55}
A=\begin{pmatrix}
 a_{11} & 0&0&0&0\\
a_{21}&a_{22}&0&0&0\\
a_{31}&a_{32}&a_{11}a_{22}& -a_{11}a_{21}&0\\
a_{41}&a_{42}&0&a_{11}^2&0\\
a_{51}&a_{52}&u&a_{54}&a_{11}^2a_{22},
\end{pmatrix}
\end{equation}
where $u=a_{11}a_{32}+a_{21}a_{42}-a_{22}a_{41}$, form the group $\Aut(L)$. We have, in addition, that
$Z^2(L,\F)=\langle \Delta_{1,2},\Delta_{1,3},\Delta_{1,4},\Delta_{1,5}+\Delta_{3,4},\Delta_{2,3},\Delta_{2,4}\rangle$,
$B^2(L,\F)=\langle \Delta_{1,2},\Delta_{1,3}+\Delta_{2,4}\rangle$, and so
$H^2(L,\F)=\langle \overline{\Delta_{1,4}},\overline{\Delta_{1,5}}+\overline{\Delta_{3,4}},\overline{\Delta_{2,3}},\overline{\Delta_{2,4}}\rangle$. 
The set of allowable subspaces of $H^2(L,\F)$ is 
\[\S=\{\langle(a,b,c,d)\rangle\mid b\neq 0\}.\] 
If $\vartheta=(a,b,c,d)\in H^2(L,\F)$, then $A\vartheta=(\bar a,\bar b,\bar c,\bar d)$ where
\begin{eqnarray*}
\bar a=a_{11}^3a+(a_{11}a_{54}+a_{11}^2a_{31}+a_{11}a_{21}a_{41})b-a_{11}a_{21}^2c+a_{11}^2a_{21}d;\\
\bar b= a_{11}^3a_{22}b,\quad \bar c=-a_{11}a_{22}a_{42}b+a_{11}a_{22}^2c,\quad \bar d=2a_{11}a_{22}a_{41}b-2a_{11}a_{21}a_{22}c+a_{11}^2a_{22}d.
\end{eqnarray*}
Choose $S=\langle(a,b,c,d)\rangle\in \S$. If $\char \F\neq 2$ then
\[\begin{pmatrix}
2b&0&0&0&0\\
0&b&0&0&0\\
-2a &0&2b^2&0&0\\
-d&c&0&4b^2&0\\
0&0&bd&0&4b^3
\end{pmatrix}S=\langle (0,1,0,0)\rangle,\]
which shows that the group of matrices of the form~\eqref{A55} is transitive on $\mathcal S$, and that $L$ has only one isomorphism type of step-1 descendants, namely $L_{6,13}$, as claimed.\par
Suppose now that $\char \F=2$. Set $b_1=d/b^2$ if $b\neq 0$ and set $b_1=1$ otherwise. Let $B$ denote the matrix 
\[\begin{pmatrix}
b_1&0&0&0&0\\
0&1&0&0&0\\
0 &0&b_1&0&0\\
0&0&0&b_1^2&0\\
0&0&0&0&b_1^2
\end{pmatrix}
\begin{pmatrix}
b&0&0&0&0\\
0&b&0&0&0\\
a &0&b^2&0&0\\
0&c&0&b^2&0\\
0&0&0&0&b^3
\end{pmatrix}. \]
Then $BS=\langle (0,1,0,1)\rangle$ if $d\neq 0$ and $BS=\langle (0,1,0,0)\rangle$ otherwise. Hence, in this case, the group of matrices of the form~\eqref{A55} has two orbits on $\mathcal S$, namely $\langle (0,1,0,0)\rangle$ and $\langle(0,1,0,1)\rangle$. The corresponding Lie algebras are $L_{6,13}$, and $L_{6,1}^{(2)}$, respectively.\par
We claim, for $\char \F= 2$ that  $L_{6,1}^{(2)}$ and $L_{6,13}$ are not isomorphic. It suffices to show that the allowable subspaces $\langle (0,1,0,0)\rangle$ and $\langle(0,1,0,1)\rangle$ are in different orbits under $\Aut(L)$. Suppose by contradiction that $A$ is of the form~\eqref{A55} such that $A\langle(0,1,0,1)\rangle=\langle(0,1,0,0)\rangle$. Then, as $\char \F=2$, the expression for $\bar d$ gives that $0  = a_{11}^2a_{22}$, which is impossible as $A$ is invertible. Hence the two descendants are non-isomorphic.

\subsection*{$\mathbf L_{5,6}$}

Set $L=L_{5,6}$. The group $\Aut(L)$ consists of the invertible matrices of the form   
\[A= \begin{pmatrix} a_{11} & 0 & 0 & 0 & 0 \\
a_{21} & a_{11}^2 & 0 & 0 & 0\\
a_{31} & a_{32} & a_{11}^3 & 0 & 0 \\
a_{41} & a_{42} & a_{11}a_{32} & a_{11}^4 & 0 \\
a_{51} & a_{52} & u & v & a_{11}^5
\end{pmatrix},\]
where $u=-a_{11}^2a_{31}+a_{11}a_{42}+a_{21}a_{32}$ and $v=a_{11}^3a_{21}+a_{11}^2a_{32}$. As $Z^2(L,\F)=\langle \Delta_{1,2},\Delta_{1,3},\Delta_{1,4},\Delta_{1,5}+\Delta_{2,4},\Delta_{2,3},\Delta_{2,5}-\Delta_{3,4}\rangle$ and $B^2(L,\F)=\langle
\Delta_{1,2},\Delta_{1,3},\Delta_{1,4}+\Delta_{2,3}\rangle$, we obtain that 
  $H^2(L,\F)=\langle \overline{\Delta_{1,5}}+
\overline{\Delta_{2,4}},\overline{\Delta_{2,3}},\overline{\Delta_{2,5}}-\overline{\Delta_{3,4}}\rangle$. Moreover,
\[\S=\{\langle(a,b,c)\rangle\mid (a,c)\neq (0,0)\}.\] 
If $\vartheta=(a,b,c)\in Z^2(L,\F)$, then $A\vartheta=(\bar a,\bar b,\bar c)$ where
\[\bar a = a_{11}^5(a_{11}a+a_{21}c),\quad \bar b = -2a_{11}^4a_{21}a+a_{11}^5b+(2a_{11}^3a_{42}-a_{11}^3a_{21}^2-a_{11}a_{32}^2)c,\quad\bar c = a_{11}^7 c.\]
Choose $S=\langle(a,b,c)\rangle\in \S$. Suppose first that $\char \F\neq 2$. Let $B$ denote the first of the following matrix if $c\neq 0$, or the second if $c=0$:
\[\begin{pmatrix}
2c&0&0&0&0\\
-2a&4c^2&0&0&0\\
0 &0&8c^3&0&0\\
0&-2(a^2+bc)&0&16c^4&0\\
0&0&-4c(a^2+bc)&-16ac^3&32c^5
\end{pmatrix},\quad
\begin{pmatrix}
2a&0&0&0&0\\
b&4a^2&0&0&0\\
0 &0&8a^3&0&0\\
0&0&0&16a^4&0\\
0&0&0&8a^3b&32a^5
\end{pmatrix}.\] 
Then, if $c\neq 0$, then $BS=\langle (0,0,1)\rangle$, while $BS=\langle (1,0,0)\rangle$ otherwise. This shows that $\Aut(L)$ has at most two orbits on $\mathcal S$, namely $\langle(0,0,1)\rangle$ and $\langle(1,0,0)\rangle$ and the corresponding Lie algebras are $L_{6,14}$ and $L_{6,15}$, respectively. The Lie algebras $L_{6,14}$ and $L_{6,15}$ are clearly non-isomorphic, as $(L_{6,15})'$ is abelian, while $(L_{6,14})'$ is not. \par
Assume now that $\char \F=2$. If $b=c=0$ then $S=\langle (1,0,0)\rangle$,
and so we may suppose that $(b,c)\neq(0,0)$. Let $B$ be the first of the following matrices if  $c=0$  and $b\neq 0$; while 
if $c\neq 0$ then let $B$ be the second matrix:
\[\begin{pmatrix}
b&0&0&0&0\\
0& b^2&0&0&0\\
0 &0& b^3&0&0\\
0&0&0& b^4&0\\
0&0&0&0& b^5
\end{pmatrix},\quad
\begin{pmatrix}
c&0&0&0&0\\
a&c^2&0&0&0\\
0 &ac&c^3&0&0\\
0&0&ac^2&c^4&0\\
0&0&a^2c&0&c^5
\end{pmatrix}.\]
Now, if $c=0$ and $b\neq 0$ then $BS=\langle (1,1,0)\rangle$, while $BS=\langle (0,b/c^3,1)\rangle$ otherwise. Thus the set of the subspaces $\langle (1,0,0)\rangle$, $\langle (1,1,0)\rangle$ and $\langle (0,\varepsilon,1)\rangle$ with $\varepsilon\in\F$ contains a representative of each of the $\Aut(L)$-orbits on $\mathcal S$. The Lie algebras corresponding to these subspaces are $L_{6,15}$, $L_{6,2}^{(2)}$ and $L_{6,3}^{(2)}(\varepsilon)$. \par
Let us now determine the possible isomorphisms among the algebras $L_{6,15}$, $L_{6,2}^{(2)}$ and $L_{6,3}^{(2)}(\varepsilon)$. First note that the derived subalgebras of $L_{6,15}$ and $L_{6,2}^{(2)}$ are abelian, while that of $L_{6,3}^{(2)}(\varepsilon)$ is not. Thus $L_{6,3}^{(2)}(\varepsilon)\not\cong L_{6,15}$ and  $L_{6,3}^{(2)}(\varepsilon)\not\cong L_{6,2}^{(2)}$. If $L_{6,2}^{(2)} \cong L_{6,15}$, then there is some $A\in\Aut(L)$ such that $A\langle(1,1,0)\rangle=\langle(1,0,0)\rangle$. However, the expression for $\bar b$ implies in this case that $a_{11}=0$, which makes $A$ non-invertible. Thus $L_{6,2}^{(2)} \not \cong L_{6,15}$.\par
Therefore it remains to determine the isomorphisms among the algebras $L_{6,3}^{(2)}(\varepsilon)$ with different values of $\varepsilon$. We claim that $L_{6,3}^{(2)}(\varepsilon)\cong L_{6,3}^{(2)}(\nu)$ if and only of $\varepsilon\relstarplus\nu$ where $\relstarplus$ is the equivalence relation defined at the beginning of Section~\ref{ressect}. To prove one direction of this claim, assume that $L_{6,3}^{(2)}(\varepsilon)\cong L_{6,3}^{(2)}(\nu)$. Then there is $A\in\Aut(L)$ such that $A\langle(0,\varepsilon,1)\rangle=\langle(0,\nu,1)\rangle$. The equations for $\bar b$ and $\bar c$ give that $a_{11}^5\varepsilon +a_{11}^3a_{21}^2+a_{11}a_{32}^2=a_{11}^7\nu$. Since $a_{11}\neq 0$, we may divide both sides of this equation by $a_{11}^5$ and obtain that $\varepsilon\relstarplus\nu$, as required. Now suppose, conversely, that $\varepsilon\relstarplus\nu$; that is, there are $\alpha\in\F^\ast$ and $\beta\in\F$ such that $\nu=\alpha^2\varepsilon+\beta^2$. Then
\[\begin{pmatrix}
\alpha^{-1}&0&0&0&0\\
0&\alpha^{-2}&0&0&0\\
0 &\beta\alpha^{-3}&\alpha^{-3}&0&0\\
0&0&\beta\alpha^{-4}&\alpha^{-4}&0\\
0&0&0&\beta\alpha^{-5}&\alpha^{-5}
\end{pmatrix}\langle(0,\varepsilon,1)\rangle=\langle(0,\nu,1)\rangle.\]
This proves the claim about the isomorphisms among the algebras $L_{6,3}^{(2)}(\nu)$.

\subsection*{$\mathbf L_{5,7}$}

The group $\Aut(L)$ consists of the invertible matrices of the form   
\[A= \begin{pmatrix} a_{11} & 0 & 0 & 0 & 0 \\
a_{21} & a_{22} & 0 & 0 & 0\\
a_{31} & a_{32} & a_{11}a_{22} & 0 & 0 \\
a_{41} & a_{42} & a_{11}a_{32} & a_{11}^2a_{22} & 0 \\
a_{51} & a_{52} & a_{11}a_{42} & a_{11}^2a_{32} & a_{11}^3a_{22}
\end{pmatrix}.\]
We have $Z^2(L,\F)=\langle\Delta_{1,2},\Delta_{1,3},\Delta_{1,4},\Delta_{1,5},\Delta_{2,3},\Delta_{2,5}-\Delta_{3,4}\rangle$ and $B^2(L,\F)=\langle\Delta_{1,2},\Delta_{1,3},\Delta_{1,4}\rangle$, and so  $H^2(L,\F)=\langle \overline{\Delta_{1,5}},\overline{\Delta_{2,3}},\overline{\Delta_{2,5}}-\overline{\Delta_{3,4}}\rangle$. Moreover,
\[\S=\{\langle (a,b,c)\rangle\mid (a,c)\neq (0,0)\}.\]
If $\vartheta=(a,b,c)\in H^2(L,\F)$, then $A\vartheta =(\bar a,\bar b,\bar c)$ where 
\[\bar a  = a_{11}^3a_{22}(a_{11}a+a_{21}c),\quad
\bar b  = a_{11}(a_{22}^2b + (2a_{22}a_{42}-a_{32}^2)c),\quad
\bar c  = a_{11}^3a_{22}^2 c.\]
Choose $S=\langle(a,b,c)\rangle\in\S$. If $b=c=0$ then $S=\langle (1,0,0)\rangle$. Let $B$ be the first, the second, or the third of the following matrices, in the cases when $c=0$ and $b\neq 0$; $c\neq 0$ and $\char \F\neq 2$; or $c\neq0$ and $\char \F=2$; respectively:
\[\begin{pmatrix}
b&0&0&0&0\\
0& b^2&0&0&0\\
0 &0& b^3&0&0\\
0&0&0& b^4&0\\
0&0&0&0& b^5
\end{pmatrix},\quad
\begin{pmatrix}
c&0&0&0&0\\
-a&2c&0&0&0\\
0 &0&2c^2&0&0\\
0&-b&0&2c^3&0\\
0&0&-bc&0&2c^4
\end{pmatrix},\quad 
\begin{pmatrix}
c&0&0&0&0\\
a&1&0&0&0\\
0 &0&c&0&0\\
0&0&0&c^2&0\\
0&0&0&0&c^3
\end{pmatrix}.\]
Then we obtain that $BS=\langle(1,1,0)\rangle$ if $c=0$ and $b\neq 0$; $BS=\langle(0,0,1)\rangle$ if $c\neq 0$ and $\char \F\neq 2$; while $BS=\langle(0,\varepsilon,1)\rangle$ if $c\neq 0$ and $\char \F=2$. Hence if $\char \F\neq 2$ then $\Aut(L)$ has at most three 
orbits on $\mathcal S$, namely $\langle (1,0,0)\rangle$, $\langle (1,1,0)\rangle$, $\langle (0,0,1)\rangle$ and the corresponding Lie algebras are $L_{6,18}$, $L_{6,17}$, and $L_{6,16}$, respectively.
If $\char \F=2$ then the 1-spaces $\langle (1,0,0)\rangle$, $\langle (1,1,0)\rangle$, $\langle (0,\varepsilon,1)\rangle$ contain
a set of representatives for the $\Aut(L)$-orbits on $\mathcal S$ with corresponding Lie algebras $L_{6,18}$, $L_{6,17}$ and $L_{6,4}^{(2)}(\varepsilon)$, respectively.\par
Note that the derived subalgebras $L_{6,17}$ and $L_{6,18}$ are abelian, while those of  $L_{6,16}$ and $L_{6,4}^{(2)}(\varepsilon)$ are not. Hence $L_{6,16}\not\cong L_{6,17}$, $L_{6,16}\not\cong L_{6,18}$, $L_{6,4}^{(2)}(\varepsilon) \not\cong L_{6,17}$ and $L_{6,4}^{(2)}(\varepsilon)\not\cong L_{6,18}$. Further, the centralizer $\langle x_3,x_4,x_5,x_6\rangle$ of $(L_{6,17})'$ is 4-dimensional while the centralizer $\langle x_2,x_3,x_4,x_5,x_6\rangle$ of $(L_{6,18})'$ is 5-dimensional, and hence $L_{6,17}\not\cong L_{6,18}$. Thus we are left with having to determine the possible isomorphisms between the algebras $L_{6,4}^{(2)}(\varepsilon)$ with different values of $\varepsilon$. We claim that $L_{6,4}^{(2)}(\varepsilon)\cong L_{6,4}^{(2)}(\nu)$ if and only if $\varepsilon\relstarplus\nu$. To show this claim, suppose first that $L_{6,4}^{(2)}(\varepsilon)\cong L_{6,4}^{(2)}(\nu)$. Then there is $A\in\Aut(L)$ such that $A\langle(0,\varepsilon,1)\rangle=\langle(0,\nu,1)\rangle$. The equations for $\bar b$ and $\bar c$ and the fact that $a_{11}\neq 0$ imply  that $a_{22}^2\varepsilon+a_{32}^2=a_{11}^2a_{22}^2\nu$, which, in turn, gives that $\varepsilon\relstarplus\nu$. Conversely, let us assume that $\varepsilon\relstarplus\nu$. Then there is some $\alpha\in\F^\ast$ and $\beta\in\F$ such that $\nu=\alpha^2\varepsilon+\beta^2$. Then 
\[\begin{pmatrix}
\alpha^{-1} & 0 & 0 & 0 & 0\\ 
0 & \alpha^{-1}\beta^{-1} & 0 & 0 & 0 \\
0 & \alpha^{-2} & \alpha^{-2}\beta^{-1}  & 0 & 0\\
0 & 0 & \alpha^{-3} & \alpha^{-3}\beta^{-1} & 0\\
0 & 0 & 0 & \alpha^{-4} & \alpha^{-4}\beta^{-1}
\end{pmatrix}\langle(0,\varepsilon,1)\rangle =\langle(0,\nu,1)\rangle.\]
This completes the proof of the claim concerning the
isomorphism among the $L_{6,4}^{(2)}(\varepsilon)$.

\subsection*{$\mathbf L_{5,8}$}

The group $\Aut(L)$ consists of the invertible matrices of the form   
\[A= \begin{pmatrix} a_{11} & 0 & 0 & 0 & 0 \\
a_{21} & a_{22} & a_{23} & 0 & 0\\
a_{31} & a_{32} & a_{33} & 0 & 0 \\
a_{41} & a_{42} & a_{43} & a_{11}a_{22} & a_{11}a_{23} \\
a_{51} & a_{52} & a_{53} & a_{11}a_{32} & a_{11}a_{33}
\end{pmatrix}.\]
We have $Z^2(L,\F)=\langle\Delta_{1,2},\Delta_{1,3},\Delta_{1,4},\Delta_{1,5},\Delta_{2,3},\Delta_{2,4},\Delta_{2,5}+\Delta_{3,4},\Delta_{3,5}\rangle$ and $B^2(L,\F)=\langle\Delta_{1,2},\Delta_{1,3}\rangle$, and so $H^2(L,\F)=\langle \overline{\Delta_{1,4}},\overline{\Delta_{1,5}},\overline{\Delta_{2,3}},\overline{\Delta_{2,4}},\overline{\Delta_{2,5}}+\overline{\Delta_{3,4}},\overline{\Delta_{3,5}}\rangle$. Further, 
\[\S=\left\{\langle (a,b,c,d,e,f)\rangle\mid \text{rank}\begin{pmatrix}a&b\\d&e\\e&f\end{pmatrix}=2\right\}.\]
If $\vartheta=(a,b,c,d,e,f)\in H$, then $A\vartheta =(\bar a,\bar b,\bar c,\bar d,\bar e,\bar f)$ where 
\begin{eqnarray*}
\bar a & =& a_{11}^2a_{22}a+a_{11}^2a_{32}b+a_{11}a_{21}a_{22}d+a_{11}(a_{21}a_{32}+a_{22}a_{31})e+a_{11}a_{31}a_{32}f;\\
\bar b & =& a_{11}^2a_{23}a+a_{11}^2a_{33}b+a_{11}a_{21}a_{23}d+a_{11}(a_{21}a_{33}+a_{23}a_{31})e +a_{11}a_{31}a_{33}f;\\
\bar c & =& (a_{22}a_{33}-a_{23}a_{32})c+(a_{22}a_{43}-a_{23}a_{42})d+(a_{22}a_{53}-a_{23}a_{52}+a_{32}a_{43}-a_{33}a_{42})e;\\
&+&(a_{32}a_{53}-a_{33}a_{52})f;\\
\bar d & =& a_{11}a_{22}^2 d +2a_{11}a_{22}a_{32}e +a_{11} a_{32}^2f;\\
\bar e & =& a_{11}a_{22}a_{23}d + a_{11}(a_{22}a_{33}+a_{23}a_{32})e+a_{11}a_{32}a_{33}f;\\
\bar f & =& a_{11}a_{23}^2 d + 2a_{11}a_{23}a_{33} e + a_{11}a_{33}^2 f. 
\end{eqnarray*}
Choose $S=\langle(a,b,c,d,e,f)\rangle\in\S$ and set $\delta_1=ae-bd$, $\delta_2=af-be$, $\delta_3=df-e^2$. Suppose first that $d\neq 0$. If $\delta_1\neq 0$, then let $B$ be the first of the following two automorphisms; if $\delta_1=0$ (which implies that $\delta_3\neq 0$), then let $B$ be the second:
\[\begin{pmatrix}
1&0&0&0&0\\
0&\delta_1&0&0&0\\
0&0&\delta_1&0&0\\
0&0&0&\delta_1&0\\
0&0&0&0&\delta_1
\end{pmatrix}\begin{pmatrix}
d&0&0&0&0\\
-a&1&e&0&0\\
0 &0&-d&0&0\\
0&0&c&d&de\\
0&0&0&0&-d^2
\end{pmatrix},\quad 
\begin{pmatrix}
1&0&0&0&0\\
0&\delta_3&0&0&0\\
1&0&\delta_3&0&0\\
0&0&0&\delta_3&0\\
0&0&0&0&\delta_3
\end{pmatrix}
\begin{pmatrix}
d&0&0&0&0\\
-a&1&e&0&0\\
0 &0&-d&0&0\\
0&0&c&d&de\\
0&0&0&0&-d^2
\end{pmatrix}.\]
Then $BS=\langle(0,1,0,1,0,\delta_3)\rangle$. Suppose now that $d=0$ and consider the following two cases: $e=0$ which implies that $a,\ f\neq 0$; and $e\neq 0$. In these cases, let $B$ denote the first or the second of the following transformations, respectively:
\[\begin{pmatrix}
1&0&0&0&0\\
0&-b&af&0&0\\
0 &a&0&0&0\\
0&0&0&-b&af\\
0&0&ac&a&0
\end{pmatrix},\quad
\begin{pmatrix}
e^2&0&0&0&0\\
af-be&1&0&0&0\\
-ae&0&e&0&0\\
0&c&0&e^2&0\\
0&0&0&0&e^3
\end{pmatrix}.\]
Then, in the first case, $BS=\langle (0,1,0,1,0,0)\rangle$, while in the second case, $BS=\langle (0,0,0,0,1,f)\rangle$. Thus if $d=e=0$ then we obtain that $S$ is in the orbit of $\langle (0,1,0,1,0,0)\rangle$. Therefore we may assume that $d=0$ and $e \neq 0$, and that $S$ is in the orbit of $\langle (0,0,0,0,1,f)\rangle$. If $f\neq 0$, then let $B_1$ denote the first of the following transformations; if $f=0$ and $\char \F\neq 2$ then let $B_1$ be the second:
\[\begin{pmatrix}
1&0&0&0&0\\
0&f&0&0&0\\
-1&-1&1&0&0\\
0&0&0&f&0\\
0&0&0&-1&1
\end{pmatrix},\quad \begin{pmatrix}
1&0&0&0&0\\
1&1&-1&0&0\\
-1&1&1&0&0\\
0&0&0&1&-1\\
0&0&0&1&1
\end{pmatrix}.\]
We obtain, in both cases, that $B_1\langle(0,0,0,0,1,f\rangle)=\langle(0,1,0,1,0,-1)\rangle$.\par
To summarize, in characteristic different from~2,  the set of subspaces $\langle (0,1,0,1,0,\varepsilon)\rangle$ contain a representative from each of the $\Aut(L)$-orbits on $\mathcal S$. In characteristic 2, these orbits are covered by the subspaces $\langle (0,1,0,1,0,\varepsilon)\rangle$ and  $\langle (0,0,0,0,1,0)\rangle$. The Lie algebras corresponding to these subspaces are $L_{6,19}(\varepsilon)$ and $L_{6,5}^{(2)}$. The Lie algebra $L_{6,19}(0)$ is written 
as $L_{6,20}$ in Section~\ref{ressect}, 
to minimize the difference between our list and that in~\cite{artw}.
The expression for $\bar d$ shows, in characteristic~2, 
that the
vectors in the $\Aut(L)$-orbit of $(0,0,0,0,1,0)$ all have 4-th 
coordinate 0, and so $\langle (0,1,0,1,0,\varepsilon)\rangle$ 
and $\langle (0,0,0,0,1,0)\rangle$ are indeed in different orbits.
Thus we only need to verify the isomorphisms
among the algebras $L_{6,19}(\varepsilon)$ with different values of $\varepsilon$. We claim that $L_{6,19}(\varepsilon)\cong L_{6,19}(\nu)$ if and only if $\varepsilon\relstar\nu$. To prove one direction of this claim assume that $L_{6,19}(\varepsilon)\cong L_{6,19}(\nu)$. Then there is some $A\in\Aut(L)$ such that $\langle (0,1,0,1,0,\varepsilon)\rangle=\langle (0,1,0,1,0,\nu)\rangle$. 
Considering the equations  for $\bar a$, $\bar b$ and $\bar f$ we obtain that 
\begin{equation}\label{eq0}
a_{22}^2+\varepsilon a_{32}^2\neq 0
\end{equation} 
and that
\begin{eqnarray}
a_{22}a_{23}+\varepsilon a_{32}a_{33}&=&0\label{eq1};\\
a_{23}^2+\varepsilon a_{33}^2-\nu a_{22}^2-\varepsilon\nu a_{32}^2
\label{eq2}&=&0.
\end{eqnarray}
If $\varepsilon=0$ then~\eqref{eq0} implies that $a_{22}\neq 0$, and equations~\eqref{eq1}--\eqref{eq2} give that $a_{23}=0$ and that $\nu=0$. Thus $L_{6,19}(0)$ is not isomorphic to $L_{6,19}(\nu)$ with $\nu\neq 0$. Therefore we may assume without loss of generality that $\varepsilon,\ \nu\neq 0$. Set $\delta=a_{22}^2+\varepsilon a_{32}^2$. Then routine computation shows that \[(a_{23}a_{22}-\varepsilon a_{23}a_{22})(a_{22}a_{23}+\varepsilon a_{32}a_{33})+(\varepsilon a_{32}^2-\delta)(a_{23}^2+\varepsilon a_{33}^2-\nu a_{22}^2-\varepsilon\nu a_{32}^2)=\delta(\nu a_{22}^2-\varepsilon a_{33}^2).\]
Since $\delta\neq 0$, equations~\eqref{eq1} and~\eqref{eq2} imply that $\nu a_{22}^2-\varepsilon a_{33}^2$. Thus either $a_{22}=a_{33}=0$ or $\varepsilon\relstar\nu$, as required. Suppose that $a_{22}=a_{33}=0$. Then $a_{23}^2=\varepsilon\nu a_{32}^2$. Since the matrix $A$ is invertible, we obtain that $a_{23}\neq 0$ and $a_{32}\neq 0$ and hence $1/\varepsilon\relstar \nu$. Since $1/\varepsilon\relstar\varepsilon$, this gives that $\varepsilon\relstar\nu$.\par
Now we assume the converse; that is, let $\varepsilon,\ \nu\in\F$ such that $\varepsilon\relstar\nu$. Let $A$ be the automorphism of 
$L_{5,8}$ represented by the diagonal matrix with the entries $(1, 1, \alpha, 1, \alpha)$ in the diagonal. Then $A\langle(0,1,0,1,0,\varepsilon)\rangle=\langle(0,1,0,1,0,\alpha^2\varepsilon)\rangle= \langle(0,1,0,1,0,\nu)\rangle$. Hence $L_{6,19}(\varepsilon)\cong 
L_{6,19}(\nu)$. \par

\subsection*{$\mathbf L_{5,9}$}

The group $\Aut(L)$ consists of the invertible matrices of the form   
\[A= \begin{pmatrix} a_{11} & a_{12} & 0 & 0 & 0 \\
a_{21} & a_{22} & 0 & 0 & 0\\
a_{31} & a_{32} & u & 0 & 0 \\
a_{41} & a_{42} & a_{11}a_{32}-a_{12}a_{31} & a_{11}u
& a_{12}u \\
a_{51} & a_{52} & a_{21}a_{32}-a_{22}a_{31} & a_{21}u 
& a_{22}u
\end{pmatrix},\]
where $u = a_{11}a_{22}-a_{12}a_{21}$. Then $Z^2(L,\F)=\langle\Delta_{1,2},\Delta_{1,3},\Delta_{1,4},\Delta_{1,5}+\Delta_{2,4},\Delta_{2,3},\Delta_{2,5}\rangle$ and $B^2(L,\F)=\langle\Delta_{1,2},\Delta_{1,3},\Delta_{2,3}\rangle$, and hence $H^2(L,\F)=\langle \overline{\Delta_{1,4}},\overline{\Delta_{1,5}}+\overline{\Delta_{2,4}},\overline{\Delta_{2,5}}\rangle$. Further,
\[\S=\{\langle (a,b,c)\rangle\mid ac-b^2\neq 0\}.\]
Let $\vartheta=(a,b,c)\in H^2(L,\F)$. Then $A\vartheta=(\bar a,\bar b,\bar c)$ where 
\begin{eqnarray*}
\bar a &=& (a_{11}^2a+2a_{11}a_{21}b+a_{21}^2c)u;\\
\bar b &=& (a_{11}a_{12}a+(a_{11}a_{22}+a_{12}a_{21})b+a_{21}a_{22}c)u;\\
\bar c &=& (a_{12}^2a+2a_{12}a_{22}b+a_{22}^2c)u.
\end{eqnarray*}
Choose $S=\langle(a,b,c)\rangle\in\S$. Let us consider three cases: $a\neq 0$; $a=0$ and $c\neq 0$; $a=0$, $c=0$, and $\char \F\neq 2$. Let, in these cases, 
$B$ denote the first, the second, or the third of the following transformations, respectively:
\[\begin{pmatrix}
 1&-b&0&0&0\\
0&a&0&0&0\\
0&0&a&0&0\\
0&0&0&a&-ab\\
0&0&0&0&a^2
\end{pmatrix},\quad
\begin{pmatrix}
  0&-c&0&0&0\\
1&b&0&0&0\\
0&0&c&0&0\\
0&0&0&0&-c^2\\
0&0&0&c&bc
\end{pmatrix},\quad
\begin{pmatrix}
 1&-1&0&0&0\\
1&1&0&0&0\\
0&0&2&0&0\\
0&0&0&2&-2\\
0&0&0&2&2
\end{pmatrix}.\]
We obtain, in the first and the second case, that $BS=\langle(1,0,ac-b^2)\rangle$, while, in the third case, we find that $BS=\langle(1,0,-1)\rangle$.\par
To summarize, if $\char \F\neq 2$ then, as $ac-b^2\neq 0$, the set formed by the subspaces $\langle (1,0,\varepsilon)\rangle$ with 
$\varepsilon\neq 0$  contain a representative in each of the $\Aut(L)$-orbits on $\mathcal S$. The Lie algebra corresponding to 
$\langle (1,0,\varepsilon)\rangle$ is $L_{6,21}(\varepsilon)$. If $\char \F=2$ then the set consisting of the subspaces $\langle(1,0,\varepsilon)\rangle$ and $\langle(0,1,0)\rangle$ contain such a system of representatives. The Lie algebra corresponding to  $\langle(0,1,0)\rangle$ is $L_{6,6}^{(2)}$. \par
The expressions for $\bar a,\ \bar b,\ \bar c$ give in characteristic~2 that $\langle (0,1,0)\rangle$ is fixed by $\Aut(L)$, and hence $L_{6,6}^{(2)}\not\cong L_{6,21}(\varepsilon)$. We claim, for $\varepsilon,\ \nu\in\F^*$ that $L_{6,21}(\varepsilon)\cong L_{6,21}(\nu)$ if and only if $\varepsilon\relstar\nu$. Suppose first that $L_{6,21}(\varepsilon)\cong L_{6,21}(\nu)$. Then there is some automorphism $A\in\Aut(L)$ such that $A(1,0,\varepsilon)=(1,0,\nu)$. Using the equations for $\bar a$, $\bar b$, and $\bar c$ we obtain that
\begin{eqnarray}
a_{11}a_{12}+a_{21}a_{22}\varepsilon&=&0;\label{eq591}\\
a_{12}^2+a_{22}^2\varepsilon-a_{11}^2\nu-a_{21}^2\varepsilon\nu&=&0\label{eq592}.
\end{eqnarray}
Simple computation shows that
\[(a_{12}a_{22}-\nu a_{11}a_{21})(a_{11}a_{12}+a_{21}a_{22}\varepsilon)-
a_{11}a_{22}(a_{12}^2+a_{22}^2\varepsilon-a_{11}^2\nu-a_{21}^2\varepsilon\nu)=
(a_{11}a_{22}-a_{12}a_{21})(a_{11}^2\nu-a_{22}^2\varepsilon).\]
Since $a_{11}a_{22}-a_{12}a_{21}\neq 0$, we obtain that equations~\eqref{eq591} and~\eqref{eq592} imply that $a_{11}^2\nu-a_{22}^2\varepsilon$. Then either $\varepsilon\relstar\nu$ or $a_{11}=a_{22}=0$. If $a_{11}=a_{22}=0$, then equation~\eqref{eq592} becomes $a_{12}^2=a_{21}^2\varepsilon\nu$. Since $A$ is invertible, we obtain, in this case, that $1/\varepsilon\relstar\nu$. Since $\varepsilon\relstar 1/\varepsilon$, this gives that $\varepsilon\relstar\nu$.\par
Conversely, let us suppose that $\varepsilon,\ \nu\in\F^*$ such that $\varepsilon\relstar\mu$. That is, there is some $\alpha\in\F^*$ such that $\nu=\alpha^2\varepsilon$. Then let $A$ be the automorphism of $L$ represented by the diagonal matrix with $(1,\alpha,\alpha,\alpha,\alpha^2)$ as its diagonal. Then $A\langle (1,0,\varepsilon)\rangle=\langle (1,0,\alpha^2\nu)\rangle= \langle (1,0,\nu)\rangle$.

\subsection*{$\mathbf L_{4,1}$}\label{41sec}

The automorphism group of $L$ is $\GL_4(\F)$ and $H^2(L,\F)=Z^2(L,\F)$ consists of all skew-symmetric bilinear forms on $L$, and hence $H^2(L,\F)=\langle \Delta_{1,2},\Delta_{1,3},\Delta_{1,4},\Delta_{2,3},\Delta_{2,4},\Delta_{3,4}\rangle$. Note that $H^2(L,\F)$ is naturally isomorphic to the wedge product  $\Lambda^2( \F^4)$ as $\GL_4(\F)$-modules; therefore we do not write the explicit formulas for the action. The set of allowable subspaces $\mathcal S$ consists of the 2-dimensional subspaces $S=\langle \vartheta_1,\vartheta_2\rangle$ such that $\vartheta_1^\perp\cap\vartheta_2^\perp=0$. Let us compute the  representatives of the $\Aut(L)$-orbits on $H^2(L,\F)$. Let $S\in\mathcal S$ and write 
\[S=\langle(a_1,b_1,c_1,d_1,e_1,f_1),(a_2,b_2,c_2,d_2,e_2,f_2)\rangle.\]
By Lemma~\ref{altlemma}(i), we may assume without loss of generality that $a_1=1$. Then
\[\begin{pmatrix}
1&0&d_1&e_1\\
0&1&-b_1&-c_1\\
0&0&1&0\\
0&0&0&1&
\end{pmatrix}S=\langle(1,0,0,0,0,f_1'),(0,b_2',c_2',d_2',e_2',f_2')\rangle,\]
where $f_1',\ b_2',\ c_2',\ d_2',\ e_2',\ f_2'\in\F$. Thus no generality is lost by assuming that
\[S=\langle(1,0,0,0,0,f_1),(0,b_2,c_2,d_2,e_2,f_2)\rangle.\]
We claim that there is $B\in \Aut(L)$ such that 
\[BS=\langle(1,0,0,0,0,f_1),(0,1,0,0,\bar e_2,f_2)\rangle.\]
Consider the following list of matrices:
\[\begin{pmatrix}
1&-d_2&0&0\\
0&b_2&0&0\\
0&0&1&-c_2\\
0&0&0&b_2&
\end{pmatrix}, 
\begin{pmatrix}
1&-e_2&0&0\\
0&c_2&0&0\\
0&0&0&-c_2\\
0&0&1&b_2&
\end{pmatrix},
\begin{pmatrix}
0&-d_2&0&0\\
1&b_2&0&0\\
0&0&1&-e_2\\
0&0&0&d_2&
\end{pmatrix},
\begin{pmatrix}
0&-e_2&0&0\\
1&c_2&0&0\\
0&0&0&-e_2\\
0&0&1&d_2&
\end{pmatrix}.\]
If $(b_2,c_2,d_2,e_2)\neq (0,0,0,0)$, then the list above contains at least one invertible matrix, and let $B$ denote this matrix. On the other hand, if $(b_2,c_2,d_2,e_2)=(0,0,0,0)$, then $f_2\neq 0$ and set, in this case,
\[B=\begin{pmatrix}
1&-f_2&f_1&0\\
1&0&0&0\\
0&0&1&0\\
-1&0&0&f_2&
\end{pmatrix}.\]
Having defined $B$ as above, we obtain $BS= \langle(1,0,0,0,0,f_1),(0,1,0,0,\bar e_2,f_2)\rangle$ with some $\bar e_2$, as claimed. 
Let us hence suppose without loss of generality that $S=\langle(1,0,0,0,0,f_1),(0,1,0,0,e_2,f_2)\rangle$. Next consider the following three cases: $f_1\neq 0$; $f_1=0$ and $e_2\neq 0$; $f_1=0$ and $e_2=0$. Note that in the last case $f_2\neq 0$, as otherwise $\vartheta_1^\perp\cap\vartheta_2^\perp\neq 0$. Define $C\in\Aut(L)$ in these cases, respectively, as follows:
\[\begin{pmatrix}
f_1&0&0&0\\
0&f_1&0&0\\
0&0&1&0\\
0&0&0&f_1
\end{pmatrix},\quad
\begin{pmatrix}
1&0&0&0\\
0&f_2&-1&0\\
0&-e_2&0&0\\
0&0&0&1
\end{pmatrix},\quad
\begin{pmatrix}
1&0&0&0\\
0&f_2&-1&0\\
0&0&1&0\\
0&0&0&1
\end{pmatrix}.\]
We obtain, in each of these cases, that $CS=\langle(1,0,0,0,0,1),(0,1,0,0,e_2',f_2)\rangle$ where $e_2'\in\F$. If $f_2=0$ then 
$CS=\langle(1,0,0,0,0,1),(0,1,0,0,\varepsilon,0)\rangle$ with some $\varepsilon\in\F$. On the other hand, if $f_2\neq 0$ then 
\[\begin{pmatrix}
f_2&0&0&0\\
0&1&0&0\\
0&0&f_2&0\\
0&0&0&1
\end{pmatrix}
\langle(1,0,0,0,0,1),(0,1,0,0,e_2',f_2)\rangle=\langle(1,0,0,0,0,1),(0,1,0,0,e_2'',1)\rangle,\]
where $e_2''\in\F$. Suppose now that $\char \F\neq 2$, and let $D$  be the first of the following two automorphisms if  $e_2''\in\{0,-1/4\}$,  while we let $D$ be the second otherwise:
\[\begin{pmatrix}
2e_2''&0&0&0\\
0&2&0&0\\
0&1&1&0\\
-1&0&0&4e_2''+1&
\end{pmatrix},\quad
\begin{pmatrix}
1&0&0&1\\
0&2&0&0\\
0& 1&1&0\\
0&0&0&2
\end{pmatrix}.\]
Then $D\langle(1,0,0,0,0,1),(0,1,0,0,e_2'',1)\rangle=\langle(1,0,0,0,0,1),(0,1,0,0,\varepsilon,0)\rangle$.\par
To summarize, in characteristic different from two, the set of 2-spaces $\langle(1,0,0,0,0,1),(0,1,0,0,\varepsilon,0)\rangle$ where
$\varepsilon\in\F$ contains at least one representative of each of the $\Aut(L)$-orbits on $\mathcal S$. In characteristic~2, such a set of 2-spaces is formed by the spaces $\langle(1,0,0,0,0,1),(0,1,0,0,\varepsilon,0)\rangle$ and the spaces $\langle(1,0,0,0,0,1),(0,1,0,0,\nu,1)\rangle$ where $\varepsilon,\ \nu\in\F$. The corresponding Lie algebras are $L_{6,22}(\varepsilon)$ and $L_{6,7}^{(2)}(\varepsilon)$, respectively.\par 
It remains to find the possible isomorphisms of the step-2 descendants of $L$. The group $\Aut(L)$ preserves, modulo scalars, a quadratic form $Q$ on $H^2(L,\F)$, defined, for  $\vartheta=(a,b,c,d,e,f)\in H^2(L,\F)$, as 
\begin{equation}\label{qfeq}
Q(\vartheta)=af-be+cd.
\end{equation}
Let $f$ denote the symmetric bilinear form associated with $Q$. It is easy to see that $Q$ is indeed preserved by the action of $\GL(V)$ modulo scalars; namely, for $A\in\GL(V)$ and $v\in V$ we have that $Q(Av)=(\det A)Q(v)$. \par
Assume first that $\char \F\neq 2$. and consider two subspaces $S_1=\langle(1,0,0,0,0,1),(0,1,0,0,\varepsilon,0)\rangle$ and $S_2=\langle(1,0,0,0,0,1),(0,1,0,0,\nu,0)\rangle$ such that $S_1$ and $S_2$ are in the same $\Aut(L)$-orbit. Since the determinants of the Gram matrices of the form $f$ restricted to $S_1$ and $S_2$ are $4\varepsilon$ and $4\nu$, respectively, Lemma~\ref{gramlemma}(2) implies that $\nu=\alpha^2\varepsilon$ with some $\alpha\in\F^*$. Conversely if $\nu=\gamma^2\varepsilon$ with some $\gamma\in\F^*$ and $A$ is the automorphism of $L_{4,1}$ represented by the diagonal matrix with the entries $(1,\gamma,1,\gamma)$ in the diagonal, then $AS_1=S_2$. \par
Suppose now that the characteristic of $\F$ is $2$. Set $S_1=\langle(1,0,0,0,0,1),(0,1,0,0,\varepsilon,0)\rangle$ and $S_2=\langle(1,0,0,0,0,1),(0,1,0,0,\nu,1)\rangle$ where $\varepsilon,\ \nu\in\F$. Since the restriction of $f$ is identically zero on $S_1$ while it is non-singular on $S_2$, we obtain that $S_1$ and $S_2$ cannot be in the same $\Aut(L)$-orbit. Suppose now that
 $S_1=\langle(1,0,0,0,0,1),(0,1,0,0,\varepsilon_1,0)\rangle$ and $S_2=\langle(1,0,0,0,0,1),(0,1,0,0,\varepsilon_2,0)\rangle$ 
such that $S_1$ and $S_2$ are in the same $\Aut(L)$-orbit. Since $f$ is identically zero on $S_1$ and $S_2$ and, for $i=1,\ 2$,  
$Q(1,0,0,0,0,1)Q(0,1,0,0,\varepsilon_i,0)=\varepsilon_i$, we obtain from Lemma~\ref{gramlemma}(iii) that $\varepsilon_2= \alpha^2\varepsilon_1+\beta^2$ with some $\alpha\in\F^*$ and $\beta\in\F$. Assume, conversely that $\varepsilon_2=\alpha^2\varepsilon_1+\beta^2$ with some $\alpha\in\F^*$ and $\beta\in\F$. Since there is nothing to prove if $\varepsilon_1=\varepsilon_2=1$, we may assume without loss of generality that $\varepsilon_1\neq 1$. Let $A$ be the automorphism of $L_{4,1}$ represented by the matrix 
\[\begin{pmatrix}
\varepsilon_1&\beta&1&\varepsilon_1\beta\\
0&\alpha&0&\alpha\\
1&\beta&1&\beta\\
0&\alpha&0&\varepsilon_1\alpha
\end{pmatrix}.\]
Then $\det A=\alpha^2(\varepsilon_1^2+1)$ which, by assumption, is non-zero, and so $A$ does define an isomorphism. Further, $A\langle(1,0,0,0,0,1),(0,1,0,0,\varepsilon_1,0)\rangle=\langle(1,0,0,0,0,1),(0,1,0,0,\varepsilon_2,0)\rangle$. \par
Finally if  $S_1=\langle(1,0,0,0,0,1),(0,1,0,0,\nu_1,1)\rangle$ and $S_2=\langle(1,0,0,0,0,1),(0,1,0,0,\nu_2,1)\rangle$, then the restriction of $f$ on $S_1$ and on $S_2$ is non-singular, and the given bases of $S_1$ and $S_2$ are symplectic. Further, the Arf invariants of $S_1$ and $S_2$ with respect to these bases are $\nu_1$ and $\nu_2$, respectively. Thus Lemma~\ref{gramlemma}(iii)
gives that $\nu_1+\nu_2\in\{\alpha^2+\alpha\ |\ \alpha\in\F^\ast\}$. Assume, conversely, that $\alpha^2+\alpha+\nu_1+\nu_2=0$ 
with some $\alpha$. Let $A$ denote the isomorphism of $L_{4,1}$ represented by the matrix
\[\begin{pmatrix}
1&0&0&\alpha\\
0&1&0&0\\
0&\alpha&1&0\\
0&0&0&1
\end{pmatrix}\]
Then $A\langle(1,0,0,0,0,1),(0,1,0,0,\nu_1,1)\rangle=\langle(1,0,0,0,0,1),(0,1,0,0,\nu_2,1)\rangle$. \par 
The argument presented in this section give rise to the proof of Theorem~\ref{kleinth}.

\begin{proof}[Proof of Theorem~$\ref{kleinth}$.]
Let $V$ be the vector space as in the statement of the theorem and let $L$ be the 4-dimensional abelian Lie algebra $L_{4,1}$.
As noted before, there is an isomorphism between the $\GL(4,\F)$-modules $V\wedge V$ and $H^2(L,\F)$ realized by the mapping 
$b_i\wedge b_j\mapsto\Delta_{i,j}$, and so we will identify $V\wedge V$ with $H^2(L,\F)$. If $S$ is a 2-dimensional subspace of $H^2(L,\F)$, then the corresponding central extension $L_S$ of $L$ is a 6-dimensional nilpotent Lie algebra with 4~generators and central derived subalgebra of dimension~2. In addition if $S$ is not allowable then $L_S=K\oplus\F$, which, using the classification of 5-dimensional nilpotent Lie algebras,  gives that $L_S\cong L_{5,8}\oplus\F$. Hence $\GL(4,\F)$ has a single orbit on the set of not allowable 2-dimensional subspaces. Since the orbits on the allowable 2-dimensional subspaces were determined in this section, the theorem follows. 
\end{proof}

\subsection*{$\mathbf L_{4,2}$}

The group $\Aut(L)$ consists of the invertible matrices of the form   
\[A= \begin{pmatrix} 
a_{11} & a_{12} & 0 & 0 \\
a_{21} & a_{22} & 0 & 0 \\
a_{31} & a_{32} & u & a_{34}\\
a_{41} & a_{42} & 0 & a_{44}
\end{pmatrix},\]
where $u = a_{11}a_{22}-a_{12}a_{21}$. We have $Z^2(L,\F)=\langle\Delta_{1,2},\Delta_{1,3},\Delta_{1,4},\Delta_{2,3},\Delta_{2,4}\rangle$, $B^2(L,\F)=\langle\Delta_{1,2}\rangle$, and so $H^2(L,\F)=\langle \overline{\Delta_{1,3}},\overline{\Delta_{1,4}},\overline{\Delta_{2,3}},\overline{\Delta_{2,4}}\rangle$. The set of allowable subspaces $\mathcal S$ consists of the 2-dimensional subspaces $S=\langle \vartheta_1,\vartheta_2\rangle$ such that $\vartheta_1^\perp\cap\vartheta_2^\perp\cap\langle x_3,x_4\rangle=0$. If $\vartheta=(a,b,c,d)\in H^2(L,\F)$, then $A\vartheta =(\bar a,\bar b,\bar c,\bar d)$ where
\begin{eqnarray*}
\bar a &=& (a_{11}a+a_{21}c)u;\\
\bar b &=& a_{11}a_{34}a + a_{11}a_{44}b + a_{21}a_{34}c+a_{21}a_{44}d;\\
\bar c &=& (a_{12}a+a_{22}c)u;\\
\bar d &=& a_{12}a_{34}a+a_{12}a_{44}b+a_{22}a_{34}c+a_{22}a_{44}d.
\end{eqnarray*}
Choose a 2-dimensional subspace $S=\langle \vartheta_1,\vartheta_2\rangle$ of $\mathcal S$ where $\vartheta_1=(a_1,b_1,c_1,d_1)$ 
and $\vartheta_2=(a_2,b_2,c_2,d_2)$. If $a_1=c_1=0$ and $a_2=c_2=0$ then $x_3\in\vartheta_1^\perp\cap\vartheta_2^\perp$, and hence $S$ is not allowable. Thus, by possibly swapping $\vartheta_1$ and $\vartheta_2$, we may assume without loss of generality that $(a_1,c_1)\neq(0,0)$. Let $B$ be the first of the following automorphisms if $a_1\neq 0$ and let $B$ be the second if $a_1=0$ (which implies that $c_1\neq 0$):
\[\begin{pmatrix}
1&-c_1&0&0\\
0&a_1&0&0\\
0&0&a_1&-a_1b_1\\
0&0&0&a_1^2&
\end{pmatrix},\quad
\begin{pmatrix}
0&1&0&0\\
-1&0&0&0\\
0&0&1&-d_1\\
0&0&0&c_1&
\end{pmatrix}.\]
Then the image $B  S$ is of the form $\langle (1,0,0,d_1'),(0,b_2',c_2',d_2')\rangle$ which implies that we may assume without loss of generality that $S=\langle (1,0,0,d_1),(0,b_2,c_2,d_2)\rangle$. We note that such an $S$ is allowable if and only if $(d_1,b_2,d_2)\neq(0,0,0)$.\par
Suppose first that $c_2=0$ and $d_2\neq 0$. Then 
\[\begin{pmatrix}
d_2&0&0&0\\
-b_2&1&0&0\\
0&0&d_2&d_1b_2\\
0&0&0&d_2
\end{pmatrix}S=
\langle (1,0,0,0),(0,0,0,1)\rangle.\]
Next, we assume that $c_2=0$ and $d_2=0$. If $d_1=0$ then $S=\langle (1,0,0,0),(0,1,0,0)\rangle$, while if $d_1\neq 0$ then
\[\begin{pmatrix}
d_1&0&0&0\\
0&1&0&0\\
0&0&d_1&0\\
0&0&0&d_1
\end{pmatrix}S=\langle (1,0,0,1),(0,1,0,0)\rangle.\]
Now suppose that $c_2\neq 0$ and $d_2=0$ and let $C$ be the first or the second of the following matrices, depending on whether $b_2=0$ or not:
\[\begin{pmatrix}
0&1&0&0\\
b_2&0&0&0\\
0&0&-b_2&0\\
0&0&0&-b_2c_2
\end{pmatrix},\quad 
\begin{pmatrix}
d_1&0&0&0\\
0&1&0&0\\
0&0&d_1&0\\
0&0&0&d_1
\end{pmatrix}.\]
Then $CS=\langle (1,0,0,1),(0,\varepsilon,1,0)\rangle$. Finally, assume that  $c_2\neq 0$ and $d_2\neq 0$ and let $D$ be the first 
or the second of the following matrices depending on whether $b_2\neq 0$:
\[\begin{pmatrix}
d_2&-d_2&0&0\\
-b_2&0&0&0\\
0&0&-b_2d_2&0\\
0&0&0&-b_2c_2
\end{pmatrix},\quad
\begin{pmatrix}
c_2&(d_1-1)c_2&0&0\\
0&d_2&0&0\\
0&0&c_2d_2&0\\
0&0&0&c_2^2
\end{pmatrix}.\]
Then $DS=\langle (1,0,0,1),(0,\bar b_2,1,1)\rangle$, with $\bar b_2\in\F$. If $\char \F\neq 2$ then this gives no new orbit as
\[\begin{pmatrix}
2&0&0&0\\
1&1&0&0\\
0&0&2&-2\\
0&0&0&4
\end{pmatrix}\langle (1,0,0,1),(0,\bar b_2,1,1)\rangle=\langle 
(1,0,0,1),(0,4\bar b_2+1,1,0)\rangle,\]
If $\char \F=2$ then we obtain that $S=\langle(1,0,0,1),(0,\nu,1,1)\rangle$.\par
To summarize, if $\char \F\neq 2$ then the list of 2-spaces $\langle (1,0,0,0),(0,0,0,1)\rangle$, $\langle (1,0,0,0),(0,1,0,0)\rangle$, $\langle (1,0,0,1),(0,1,0,0)\rangle$, and $\langle (1,0,0,1),(0,\varepsilon,1,0)\rangle$ with $\varepsilon\in\F$ 
contains at least one representative for each of the $\Aut(L)$-orbits on $\mathcal S$. The corresponding Lie algebras are $L_{6,27}$, $L_{6,25}$, $L_{6,23}$, and $L_{6,24}(\varepsilon)$. In characteristic~2, such a set is formed by the subspaces above in 
addition to the subspaces $\langle (1,0,0,1),(0,\nu,1,1)\rangle$ with $\nu\in\F$. The Lie algebra that corresponds to the subspace 
$\langle (1,0,0,1),(0,\nu,1,1)\rangle$ is $L_{6,8}^{(2)}(\nu)$. \par
Finally, in this section we have to determine the possible isomorphisms of the Lie algebras in the previous paragraph. First we note, for $L=L_{6,24}(\varepsilon)$ and $L=L_{6,8}^{(2)}(\nu)$,  that $C(L)=L^3$, while this equation is not valid for $L_{6,27}$, $L_{6,23}$, or $L_{6,25}$. In order to separate the Lie algebras $L_{6,27}$, $L_{6,25}$, $L_{6,23}$ we use the geometry of the $\Aut(L)$-action on $H^2(L,\F)$. The expressions for $\bar a$, $\bar b$, $\bar c$, and $\bar d$ above give that the action of the automorphism $A$ on $H^2(L,\F)$ is represented, with respect to the basis  $\{\overline{\Delta_{1,3}},\overline{\Delta_{1,4}},\overline{\Delta_{2,3}}, \overline{\Delta_{2,4}}\}$, by the matrix 
\begin{equation}\label{tensor}
\begin{pmatrix}
a_{11}u & a_{11}a_{34} & a_{12}u & a_{12}a_{34}\\
0 &  a_{11}a_{44} & 0 & a_{12}a_{44} \\
a_{21}u & a_{21}a_{34} & a_{22}u & a_{22}a_{34} \\
0 & a_{21}a_{44} & 0 & a_{22}a_{44}
\end{pmatrix}=
\begin{pmatrix}
a_{11} & a_{12} \\ 
a_{21} & a_{22}
\end{pmatrix}\otimes 
\begin{pmatrix}
u & a_{34} \\
0 & a_{44}
\end{pmatrix}.
\end{equation}
It is well-known that $\GL(2,\F)\otimes\GL(2,\F)$ preserves a quadratic form modulo scalars in its natural action on $\F^4$. 
To exploit this fact in our situation, define a quadratic form $Q$ on $H^2(L,\F)$ as \[Q(\vartheta)=\alpha_1\alpha_4-\alpha_2\alpha_3\quad\mbox{where}\quad\vartheta=(\alpha_1,\alpha_2,\alpha_3,\alpha_4).\]
Let $A\in\Aut(L)$ and decompose $A$ as $g_1\otimes g_2$ as in~\eqref{tensor}. Then we have, for all $\vartheta\in H^2(L,\F)$, that
\[Q(A\vartheta)=Q((g_1\otimes g_2)\vartheta)=(\det g_1)(\det g_2)Q(\vartheta).\]

Now to show that the subspaces obtained above are in different $\Aut(L)$-orbits, notice that the subspace $\langle (1,0,0,0),(0,1,0,0)\rangle$  is totally singular. On the other hand, the singular vectors of the subspace $\langle (1,0,0,1),(0,1,0,0)\rangle$ are the elements of the 1-space $\langle (0,1,0,0)\rangle$, and hence two singular vectors are linearly dependent. Moreover, the singular vectors of the subspace $\langle (1,0,0,0),(0,0,0,1)\rangle$ are the elements of the 1-spaces $\langle (1,0,0,0)\rangle$ and $\langle (0,0,0,1)\rangle$, which shows that there is a pair of linearly independent singular vectors. Thus these three subspaces are in different $\Aut(L)$-orbits. Let us consider now two subspaces of the form 
$S_1=\langle (1,0,0,1),(0,\varepsilon,1,0)\rangle$ and $S_2=\langle (1,0,0,1),(0,\nu,1,0)\rangle$ and assume that they are in the same $\Aut(L)$-orbit; that is $S_2=AS_1$ with some $A\in\Aut(L)$. If $\char \F\neq 2$, then the Gram determinants of $S_1$ and $S_2$ with respect to the given bases are $\varepsilon$ and $\nu$, respectively. Lemma~\ref{gramlemma}(i) gives that $\varepsilon\relstar\nu$. Conversely, assume that $\char \F\neq 2$ and that $\varepsilon\relstar\nu$; that is $\nu=\varepsilon\alpha^2$ with some $\alpha\in\F^*$. Let $A$ be the diagonal automorphism of $L$ with $(\alpha,1,\alpha,\alpha^2)$ in the diagonal. Then $A\langle (1,0,0,1),(0,\varepsilon,1,0)\rangle=\langle (1,0,0,1),(0,\nu,1,0)\rangle$. Hence the subspaces $\langle (1,0,0,1),(0,\varepsilon,1,0)\rangle$ and $\langle (1,0,0,1),(0,\nu,1,0)\rangle$ are in the same $\Aut(L)$-orbit if
and only if $\varepsilon\relstar\nu$. This settles the isomorphisms among the possible step-2 descendants of $L_{4,2}$ in the cases when $\char \F\neq 2$. \par
Suppose next that $\char \F=2$ and let 
\[S_1=\langle (1,0,0,1),(0,\varepsilon_1,1,0)\rangle\quad\mbox{and}\quad S_2=\langle (1,0,0,1),(0,\varepsilon_2,1,0)\rangle,\]
as above. Since $f$ is identically zero on $S_1$ and $S_2$ and, for $i=1,\ 2$, $Q(1,0,0,0,0,1)Q(0,1,0,0,\varepsilon_i,0)=\varepsilon_i$, we obtain from Lemma~\ref{gramlemma}(iii) that $\varepsilon_2=\alpha^2\varepsilon_1+\beta^2$ with some $\alpha\in\F^*$ and $\beta\in\F$. Suppose conversely that $\char \F=2$ and that $\varepsilon_1\relstarplus\varepsilon_2$; that is $\varepsilon_2=\alpha^2\varepsilon_1+\beta^2$ with some $\alpha\in\F^*$ and $\beta\in\F$. Let $A$ be the automorphism of $L$ represented by the matrix
\[\begin{pmatrix}
\alpha & 0 & 0 & 0 \\
\beta & 1 & 0 & 0\\
0 & 0 & \alpha & \alpha\beta\\
0 & 0 & 0 & \alpha^2
\end{pmatrix}.\]
Then $AS_1=S_2$, which gives, when $\char \F=2$, that $\langle (1,0,0,1),(0,\varepsilon_1,1,0)\rangle$ and $\langle (1,0,0,1),(0,\varepsilon_2,1,0)\rangle$ are in the same $\Aut(L)$-orbit if and only if $\varepsilon_1\relstarplus\varepsilon_2$. \par
Suppose now that $S_1=\langle (1,0,0,1),(0,\nu_1,1,1)\rangle$ and $S_2=\langle (1,0,0,1),(0,\nu_2,1,1)\rangle$ and assume  that $S_1$ and $S_2$ are in the same $\Aut(L)$-orbit; that is, there is some $A\in\Aut(L)$ such that $S_1A=S_2$. The restriction of $f$ on $S_1$ and on $S_2$ is non-singular, and the given bases of $S_1$ and $S_2$ are symplectic. Further, the Arf invariants of $S_1$ and $S_2$ with respect to these bases are $\nu_1$ and $\nu_2$, respectively. Thus Lemma~\ref{gramlemma}(iii) gives that $\nu_1+\nu_2\in\{\alpha^2+\alpha\ |\ \alpha\in\F\}$. Conversely, suppose that $\nu_1,\ \nu_2\in\F$ such that $x^2+x+\nu_1+\nu_2$ has a solution $\alpha$ in $\F$. Let $A$ denote the automorphism represented by the matrix
\[\begin{pmatrix}
1 & 0 & 0 & 0 \\
\alpha & 1 & 0 & 0\\
0 & 0 & 1 & \alpha\\
0 & 0 & 0 & 1
\end{pmatrix}.\]
Then  $AS_1=S_2$, and so two subspaces $\langle(1,0,0,1),(0,\nu_1,1,1)\rangle$ and $\langle (1,0,0,1),(0,\nu_2,1,1)\rangle$ are in the same $\Aut(L)$-orbit if and only if the polynomial $x^2+x+\nu_1+\nu_2$ has a root in $\F$.

\section*{Acknowledgment}

The first and the third authors would like to acknowledge the 
support of the grants ISFL-1-143/BPD/2009 and PTDC/MAT/101993/2008, 
respectively, of the  {\em Funda\c c\~ao para a Ci\^ encia e a Tecnologia} (Portugal). 
The third author was also supported by
the Hungarian Scientific Research Fund (OTKA) grant~72845.

\bigskip

\noindent Serena Cical\`o and Csaba Schneider\\
Centro de \'Algebra de Universidade de Lisboa\\
Av.\ Professor Gama Pinto, 2, 1649-003 Lisboa, Portugal.\\
cicalo@science.unitn.it, csaba.schneider@gmail.com

\bigskip

\noindent Serena Cical\`o (current address)\\
Dipartimento di Matematica e Informatica\\
Via Ospedale, 72 - 09124 Cagliari, Italy\\
cicalo@science.unitn.it

\bigskip

\noindent Willem A.\ de Graaf\\
Dipartimento di Matematica\\
Via Sommarive 14 - 38050 Povo (Trento), Italy\\
degraaf@science.unitn.it

\end{document}